\documentclass[smallextended]{svjour3}
\usepackage[utf8]{inputenc}
\usepackage[T1]{fontenc}
\usepackage{enumerate}
\usepackage{stmaryrd}
\usepackage{amsfonts}
\usepackage{ifpdf}
\ifpdf
\usepackage[all,pdf,2cell]{xy}\UseAllTwocells\SilentMatrices
\else
\usepackage[all,xdvi,2cell]{xy}\UseAllTwocells\SilentMatrices
\fi
\usepackage{mathtools}

\newcommand{\EA}{\mathbf{EA}}
\newcommand{\C}{\mathcal{C}}

\newcommand{\Bool}{\mathbf{Bool}}
\newcommand{\Dom}{\mathrm{Dom}}
\newcommand{\id}{\mathrm{id}}

\newcommand{\FinBool}{\mathbf{FinBool}}
\newcommand{\Set}{\mathbf{Set}}

\newcommand{\elems}[1]{\int R(#1)}

\newcommand{\Day}{\otimes_{\mathrm{Day}}}

\newcommand{\obs}[1]{\mathcal O_{#1}}
\begin{document}
\title{Effect algebras as presheaves on finite Boolean algebras}
\author{Gejza Jenča}
\institute{
G. Jenča \at
Department of Mathematics and Descriptive Geometry\\
Faculty of Civil Engineering,
Slovak University of Technology,
	Slovak Republic\\
              \email{gejza.jenca@stuba.sk}
}
\maketitle
\begin{abstract}
For an effect algebra $A$, we examine the category of all morphisms from finite
Boolean algebras into $A$.  This category can be described as a category of
elements of a presheaf $R(A)$ on the category of finite Boolean algebras.  We
prove that some properties (being an orthoalgebra, the Riesz decomposition
property, being a Boolean algebra) of an effect algebra $A$ can be
characterized in terms of some properties of the category of elements of the
presheaf $R(A)$. We prove that the tensor product of effect algebras arises as
a left Kan extension of the free product of finite Boolean algebras along the
inclusion functor. The tensor product of effect algebras can be expressed by
means of the Day convolution of presheaves on finite Boolean algebras.
\end{abstract}
\begin{acknowledgements}
This research is supported by grants VEGA 2/0069/16, 1/0420/15,
Slovakia and by the Slovak Research and Development Agency under the contracts
APVV-14-0013, APVV-16-0073.
\end{acknowledgements}
\keywords{effect algebra, tensor product, presheaf}

\section{Introduction}

In their 1994 paper \cite{FouBen:EAaUQL}, D.J. Foulis and M.K. Bennett defined
effect algebras as (at that point in time) the most general version of
quantum logics. Their motivating example was the set of all Hilbert space
effects, a notion that plays an important role in quantum mechanics
\cite{Lud:FoQM,BusLahMit:TQToM}. An equivalent definition in terms of the difference
operation was independently given by F. Kôpka and F. Chovanec in \cite{KopCho:DP}. Later
it turned out that both groups of authors rediscovered the definition given already in 1989
by R. Giuntini and H. Greuling in \cite{GiuGre:TaFLfUP}.

By the very definition, the class of effect algebras
includes orthoalgebras \cite{FouGreRut:FaSiO}, which include 
orthomodular posets and orthomodular lattices. It soon turned out \cite{ChoKop:BDP}
that there is another interesting subclass of effect algebras, namely MV-algebras
defined by C.C. Chang in 1958 \cite{Cha:AAoMVL} to give the algebraic semantics
of the Łukasiewicz logic. Furthermore, K. Ravindran in his thesis \cite{Rav:OaSToEA} proved that a
certain subclass of effect algebras (effect algebras with the Riesz decomposition property)
is equivalent with the class of partially ordered abelian groups with interpolation \cite{Goo:POAGwI}.
This result generalizes the equivalence of MV-algebras and lattice ordered abelian groups described by
D. Mundici in \cite{Mun:IoAFCSAiLSC}.

In the present paper, we study effect algebras from the viewpoint of category theory.
There are two papers that inspired and motivated the results presented here. 

In their paper \cite{jacobs2012coreflections} Jacobs and Mandemaker utilized
the notion of a coreflective subcategory to prove important results about
effect algebras and their generalized versions.  In particular, they proved
that the category of effect algebras $\EA$ is cocomplete and that 
$\EA$, when equipped with the tensor product
of effect algebras \cite{dvurevcenskij1995tensor}, is a symmetric monoidal category.

In \cite{staton2015effect}, Staton and Uijlen proved that
every effect algebra $A$ can be faithfully represented by a presheaf $R(A)$ on
the category of finite Boolean algebras. This representation is the main tool
we shall use in this paper.

After preliminaries, we prove that several properties (being an orthoalgebra,
having the Riesz decomposition property, being a Boolean algebra) of an effect
algebra $A$ can be characterized by properties of the category of elements of
the representing presheaf $R(A)\colon \FinBool^{op}\to\Set$. 
We use the presheaf representations of effect algebras to prove that the
tensor product of of effect algebras arises as a left Kan extension of the free
product of finite Boolean algebras along the square of the inclusion functor 
of the category of finite Boolean algebras into the category of effect algebras. As a
consequence, the tensor product of effect algebras can be expressed by means of
the Day convolution of presheaves on finite Boolean algebras.

These results mean that the tensor product of effect algebras comes from 
the free product of finite Boolean algebras.  This could be interpreted as an additional
justification of the naturality of the tensor product construction in algebraic
quantum logics.

\section{Preliminaries}

We assume familiarity with basics of category theory, see
\cite{mac1998categories,riehl2016category} for reference.
For effect algebras and related topics, see \cite{DvuPul:NTiQS}. 

\subsection{Effect algebras}

An {\em effect algebra} 
is a partial algebra $(A;+,0,1)$ with a binary 
partial operation $+$ and two nullary operations $0,1$ satisfying
the following conditions.
\begin{enumerate}
\item[(E1)]If $a+b$ is defined, then $b+a$ is defined and
		$a+b=b+a$.
\item[(E2)]If $a+b$ and $(a+b)+c$ are defined, then
		$b+c$ and $a+(b+c)$ are defined and
		$(a+b)+c=a+(b+c)$.
\item[(E3)]For every $a\in A$ there is a unique $a^\perp\in A$ such that
		$a+a^\perp$ exists and $a+a^\perp=1$.
\item[(E4)]If $a+1$ is defined, then $a=0$.
\end{enumerate}

Effect algebras were introduced by Foulis and Bennett in their paper 
\cite{FouBen:EAaUQL}.
In \cite{KopCho:DP}, K\^ opka and Chovanec introduced
an essentially equivalent structure called {\em D-poset}.
Another equivalent structure was introduced by Giuntini and
Greuling in \cite{GiuGre:TaFLfUP}. 

The original definition of an effect algebra
\cite{FouBen:EAaUQL,GiuGre:TaFLfUP} excluded the
case of a one-element effect algebra; it was required that $0\neq 1$.
This has some undesirable consequences: for example a total relation
on an effect algebra is not a congruence in the sense of \cite{GudPul:QoPAM}
and the category of effect algebras lacks the terminal object.
On the other hand, the definition of a D-poset in \cite{KopCho:DP} allows
for one-element D-posets.
In the present paper, we do not assume that $0\neq 1$ in an effect algebra.

In an effect algebra $A$, we write $a\leq b$ if and only if there is
$c\in A$ such that $a+ c=b$.  It is easy to check that for every effect
algebra $A$, $\leq$ is a partial order on $A$.  In this partial order,
$0$ is the smallest and $1$ is the greatest element of the poset $(A,\leq)$,
so every effect algebra has an underlying bounded poset.

The partial operation $+$ is cancellative. Therefore, on every effect algebra it is possible to introduce
a new partial operation $-$; $b-a$ is defined if and only if $a\leq
b$ and then $a+(b-a)=b$.  It can be proved that, in an effect
algebra, $a+ b$ is defined if and only if $a\leq b^\perp$ if and only if $b\leq
a^\perp$. In an effect algebra, we write $a\perp b$ if and only if $a+b$ 
is defined and we say that $a$ and $b$ are {\em orthogonal}.

Let $A_1$, $A_2$ be effect algebras. A map $f\colon A_1\to A_2$ is called a
{\em morphism of effect algebras} if and only if it satisfies the following conditions.
\begin{itemize}
\item $f(1)=1$.
\item If $a\perp b$, then $f(a)\perp f(b)$ and $f(a+ b)=f(a)+f(b)$.
\end{itemize}
Every morphism of effect algebras is an isotone map of the underlying bounded posets. 

A {\em subalgebra} of an effect algebra $A$ is a subset $B\subseteq A$ such that
that $1\in B$ and, for all $x,y\in B$ with $x\geq y$,
$x-y\in B$. Since $x^\perp=1-y$ and $x+y=(x^\perp-y)^\perp$,
every subalgebra is closed with respect to $+$ and $~^\perp$.

The category of effect algebras is denoted by $\EA$. 
The category $\EA$ is complete and cocomplete. The proof of the fact that the category of effect algebras
is cocomplete is nontrivial, see \cite{jacobs2012coreflections} for the proof. Let us point out a
surprising fact the regular epimorphisms in $\EA$ are not necessary surjective,
see \cite[Section 5.2]{jacobs2012coreflections} for an example. This shows that 
the forgetful functor $\EA\to\Set$ that takes the effect algebra to its underlying
set is not monadic, although it is a right adjoint. 
On the other hand, as proved in \cite{jenca2015effect}, the forgetful 
functor that takes an effect algebra to its underlying bounded poset is a 
monadic functor from $\EA$ to the category of bounded posets.

\subsection{Classes of effect algebras, examples}

The class of effect algebras is a common generalization of several types
of algebraic structures.

An effect algebra $A$ is an {\em orthoalgebra} \cite{FouGreRut:FaSiO} if, for
all $a\in A$, $a\perp a$ implies $a=0$. Orthomodular lattices \cite{Kal:OL,Ber:OLaAA} can
be characterized as lattice ordered orthoalgebras.

\begin{example}
Let $\mathcal H$ be a Hilbert space.
The set of all orthogonal projections $P(\mathcal H)$ on $\mathcal H$
is an orthomodular lattice \cite{PtaPul:OSaQL}, hence it is an orthoalgebra.
\end{example}

One can construct examples of effect algebras from an arbitrary partially
ordered abelian group $(G,+,0,\leq)$ in the following way. Choose any positive $u\in
G$; then, for $0\leq a,b\leq u$, define $a\ovee b$ if and only if $a+b\leq u$
and put $a\ovee b=a+b$.  With such partial operation $\ovee$, the interval
$$
[0,u]_G=\{x\in G\colon 0\leq x\leq u\}
$$ becomes an effect algebra $([0,u]_G,\ovee,0,u)$.  Effect algebras that
arise from partially ordered abelian groups in this way are called {\em interval
effect algebras}, see \cite{BenFou:IaSEA}.

\begin{example}
The closed real interval $[0,1]_{\mathbb R}$ is an interval effect algebra. 
\end{example}
\begin{example}
Let $\mathcal H$ be a Hilbert space. Let $S(\mathcal H)$ be the set of
all bounded self-adjoint operators on $\mathcal H$. 
For $A,B\in S(\mathcal H)$, write $A\leq B$ if and only if $B-A$ has a nonnegative spectrum.
Then $S(\mathcal H)$ is a partially ordered abelian group.
The interval $E(\mathcal H)=[0,I]_{S(\mathcal H)}$, where $I$ is the identity operator,
is an interval effect algebra,
called the {\em standard effect algebra}.
\end{example}

\subsection{Finite summable families}

Let $A$ be an effect algebra. For a finite set $I$, an ($I$-indexed)
{\em summable family} of elements of $A$ is a family $(a_i)_{i\in I}$ such that the
sum
$\sum_{i\in I}a_i$
exists in $A$. A finite summable family $(a_i)_{i\in I}$ with $\sum_{i\in I}a_i=1$ is called a
{\em finite decomposition of unit}.

We say that a summable family $(b_j)_{j\in J}$ is a {\em refinement}
of a summable family $(a_i)_{i\in I}$ if there is a surjective mapping
$\rho\colon J\to I$ such that, for all $i\in I$,
$$
a_i=\sum_{\rho(j)=i} b_j.
$$
It is easy to see that if $(b_j)_{j\in J}$ is a refinement of $(a_i)_{i\in I}$, then
$\sum_{i\in I}a_i=\sum_{j\in J}b_j$.

\subsection{Boolean algebras, observables}
\label{subs:boolean}
Every Boolean algebra $(A,\vee,\wedge,~^\perp,0,1)$ is an effect algebra.
The partial operation $+$ on $A$ is given by the rule $x\perp y$ if and only if
$x\wedge y=0$ and then $x+y=x\vee y$. Clearly, this is an effect algebra with
the same partial order as the original Boolean algebra. This shows that the
category of Boolean algebras $\Bool$ is a subcategory of the category of effect
algebras $\EA$. Moreover, $\Bool$ is a full subcategory of $\EA$.  Therefore, we can (and
we will) identify Boolean algebras with their respective effect-algebraic
versions.

If $X$ is a Boolean algebra and $A$ is an effect algebra, then a morphism
$g\colon X\to A$ is called an {\em ($A$-valued) observable}. 
In general, the range of an observable $g\colon X\to A$ is not 
a sub-effect algebra of $A$. However, if $A$ is an orthoalgebra
then the range of $g$ is a sub-effect algebra of $A$.
Moreover, $g(X)$ is then a Boolean algebra.

\subsection{A notation for finite observables}

In what follows, we abbreviate the initial segment of natural numbers
$\{1,\dots,n\}$ by $[n]$. Note that $[0]=\emptyset$.

An observable from a finite Boolean algebra to an effect algebra is called
a {\em finite observable}.  If $A$ is an effect algebra and
$g\colon 2^{[n]}\to A$ is a finite $A$-valued observable, then it is obvious that $g$ is
determined by its values on singleton subsets of $[n]$. Indeed, 
every $X=\{x_1,\dots,x_k\}\in 2^{[n]}$ can be expressed as 
a sum of singletons $X=\{x_1\}+\dots+\{x_k\}$ and hence 
$$
g(X)=g(\{x_1\}+\dots+\{x_k\})=g(\{x_1\})+\dots+g(\{x_k\}).
$$
Thus, $g$ can be expressed by a finite $[n]$-indexed decomposition of unit 
$$
(g(\{x_1\}),\dots,g(\{x_n\})).
$$
Note that we can safely omit the last element
of this sequence without losing any information, because
$$
g(\{x_n\})=\bigl(\,g(\{x_1\})+\dots+g(\{x_{n-1}\})\,\bigr)^\perp.
$$
In this way, every summable sequence $(a_1,\dots,a_{n-1})$ of elements of an effect algebra $A$ 
determines a finite observable
$\obs{a_1,\dots,a_{n-1}}2^{[n]}\colon ^{[n]}\to A$ and every finite $A$-valued observable on $2^{[n]}$
is determined by a summable sequence of $n-1$ elements of $A$.
We will use the notation $\obs{a_1,\dots,a_{n-1}}$ throughout this paper.

For example, for every element $a\in A$, $\obs{a}$ denotes the observable $2^{[2]}\to A$ given by 
the table
$$
\begin{array}{|c||c|c|c|c|}
\hline
X& \emptyset & \{1\} & \{2\} & \{1,2\} \\
\hline
\obs{a}(X)&0 & a & a^\perp & 1\\
\hline
\end{array}
$$
The symbol $\obs{\emptyset}$ denotes the (unique) observable $\obs{\emptyset}\colon 2^{[1]}\to A$.
Note that $\obs{0}$ and $\obs{\emptyset}$ are not the same thing.

\subsection{Riesz decomposition property}

An effect algebra $A$ satisfies the {\em Riesz decomposition property} if and
only if, for all $u,v_1,v_2\in A$, $u\leq v_1+v_2$ implies that there exist
$u_1,u_2\in A$ such that $u_1\leq v_1$, $u_2\leq v_2$ and $u=u_1+u_2$. It was
proved in \cite{Rav:OaSToEA} that every effect algebra satisfying the Riesz
decomposition property is an interval in a partially ordered abelian group
satisfying the Riesz decomposition property. Such groups are
sometimes called {\em interpolation groups}, see \cite{Goo:POAGwI}. Every
lattice-ordered abelian group is an interpolation group.

\begin{example}
The set of all differentiable functions $\mathbb R\to [0,1]_{\mathbb R}$
forms an effect algebra satisfying the Riesz decomposition property. We note
that this effect algebra is not lattice ordered.
\end{example}

\begin{proposition}
For an effect algebra $A$, the following are equivalent
\begin{enumerate}[(a)]
\item $A$ satisfies the Riesz decomposition property.
\item For all $x_1,\dots,x_n,y_1,\dots,y_m\in A$ such that
$x_1+\dots+x_n=y_1+\dots+y_m$ there exists an $n\times m$ matrix
$Z=(z_{ij})$ of elements of $A$ such that, for all
$i=1,\dots,n$, $x_i$ is the sum of $i$-th row and, for
all $j=1,\dots,m$, $y_j$ is the sum of $j$-th column of $Z$.
\item $A$ satisfies (b) for $n=m=2$.
\end{enumerate}
\end{proposition}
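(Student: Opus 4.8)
The plan is to prove the cycle of implications $(a)\Rightarrow(b)\Rightarrow(c)\Rightarrow(a)$. The implication $(b)\Rightarrow(c)$ is trivial, since $(c)$ is just the special case $n=m=2$ of $(b)$. The implication $(c)\Rightarrow(a)$ is also easy: given $u\le v_1+v_2$, set $w=(v_1+v_2)-u$, so that $u+w=v_1+v_2$. Apply $(c)$ with $x_1=u$, $x_2=w$, $y_1=v_1$, $y_2=v_2$ to get a $2\times 2$ matrix $(z_{ij})$ with $z_{11}+z_{12}=u$, $z_{11}+z_{21}=v_1$, $z_{12}+z_{22}=v_2$. Then $u_1:=z_{11}$ and $u_2:=z_{12}$ satisfy $u_1\le v_1$, $u_2\le v_2$, and $u_1+u_2=u$, which is exactly the Riesz decomposition property.

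The substantial direction is $(a)\Rightarrow(b)$, which I would prove by induction on $n+m$. The base cases where $n\le 1$ or $m\le 1$ are immediate (a single row or column is the whole family). For the inductive step, suppose $x_1+\dots+x_n=y_1+\dots+y_m$ with $n,m\ge 2$. Since $x_1\le x_1+\dots+x_n=y_1+(y_2+\dots+y_m)$, the Riesz decomposition property (applied with $v_1=y_1$ and $v_2=y_2+\dots+y_m$) yields $z_{11}\le y_1$ and $r\le y_2+\dots+y_m$ with $x_1=z_{11}+r$. Now I would split into two sub-decompositions. First, iterating the Riesz property (or rather, using the already-established refinement form of it) to decompose $r$ as $r=z_{12}+\dots+z_{1m}$ with $z_{1j}\le y_j$ for $j=2,\dots,m$; this fills in the first row. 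Setting $y_j':=y_j-z_{1j}$ for $j\ge 2$ and $y_1':=y_1-z_{11}$, we have $x_2+\dots+x_n=y_1'+\dots+y_m'$, a relation with $n-1$ rows, to which the induction hypothesis applies, producing the remaining rows $2,\dots,n$ of the matrix. One checks that the column sums come out right: the $j$-th column sum is $z_{1j}+(y_j-z_{1j})=y_j$ for $j\ge 2$, and similarly $z_{11}+y_1'=y_1$.

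The main obstacle is the iterated decomposition of $r$ across $y_2,\dots,y_m$: strictly speaking this is itself an instance of statement $(b)$ (with a $1\times(m-1)$ shape), so I must be careful to organize the induction so that this sub-claim is available — either by proving the ``one-row'' case $n=1$ (arbitrary $m$) first as a lemma by a separate induction on $m$ using only the Riesz decomposition property, or by strengthening the induction hypothesis to cover all pairs $(n',m')$ with $n'+m'<n+m$ and invoking it for the shape $(1,m-1)$. I would take the latter route, since it keeps everything in one induction. A secondary point to verify carefully is that subtraction behaves well: $z_{1j}\le y_j$ guarantees $y_j-z_{1j}$ is defined, and the associativity/commutativity axioms (E1), (E2) ensure the rearrangements of sums needed to see that $x_2+\dots+x_n$ equals $\sum_j(y_j - z_{1j})$ together with the correction from the first row. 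These are routine effect-algebra manipulations using cancellativity, so I would not belabor them.
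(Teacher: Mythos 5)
Your proposal is correct, but note that the paper does not actually prove this proposition: its ``proof'' is a pointer to the literature (Section~1.7 of Dvure\v{c}enskij--Pulmannov\'a), so you are supplying an argument where the paper gives none. What you write is the standard one: $(b)\Rightarrow(c)$ is trivial, your $(c)\Rightarrow(a)$ reduction via $w=(v_1+v_2)-u$ is exactly right, and the double induction for $(a)\Rightarrow(b)$ (peel off the first row, replace $y_j$ by $y_j-z_{1j}$, recurse on $n-1$ rows) is the classical refinement argument, in substance the same proof as in the cited monograph. One small imprecision, which you partly flag yourself: the step that spreads $r$ over $y_2,\dots,y_m$ is not literally a $1\times(m-1)$ instance of $(b)$, because you only have $r\leq y_2+\dots+y_m$, not equality. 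Two clean fixes: either prove the auxiliary lemma ``$r\leq y_2+\dots+y_m$ implies $r=z_{12}+\dots+z_{1m}$ with $z_{1j}\leq y_j$'' by a separate induction on $m$ using RDP directly (this is just iterating the $v_1,v_2$ form), or set $r'=(y_2+\dots+y_m)-r$ and invoke your strengthened induction hypothesis for the shape $(2,m-1)$, whose total $m+1$ is indeed smaller than $n+m$ since $n\geq 2$. With either repair the induction closes, and the remaining bookkeeping (existence of the sums $\sum_j(y_j-z_{1j})$, cancellation, column sums $z_{1j}+(y_j-z_{1j})=y_j$) is routine, as you say.
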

\begin{proof}
See \cite[Section 1.7]{DvuPul:NTiQS}.
\end{proof}

In our terminology, we may express this as follows.
\begin{proposition}
An effect algebra $A$ satisfies the Riesz decomposition property if and only if
any two summable families with the same sum admit a common refinement.
\end{proposition}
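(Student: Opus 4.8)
The plan is to obtain the equivalence by a direct translation of the matrix characterization of the Riesz decomposition property proved above. Condition~(b) there --- existence of an $n\times m$ matrix with prescribed row and column sums --- is, after identifying its index set with $[n]\times[m]$, exactly the assertion that the two families $(x_i)$ and $(y_j)$ have a common refinement whose two connecting surjections are the product projections. So the argument consists in passing, in both directions, between ``there is a matrix with the right marginals'' and ``there is a common refinement''. Throughout I use the generalized associative law for finite summable families in an effect algebra: any family obtained by regrouping the terms of a summable family is again summable with the same total, and in particular every subfamily of a finite summable family is summable; this is a routine consequence of (E1)--(E2).

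\emph{From the Riesz decomposition property to common refinements.} Let $(x_i)_{i\in I}$ and $(y_j)_{j\in J}$ be summable families with $\sum_{i\in I}x_i=\sum_{j\in J}y_j$. Relabelling the index sets bijectively with $[n]$ and $[m]$, the matrix characterization gives elements $z_{ij}\in A$ ($i\in I$, $j\in J$) whose $i$-th row sums to $x_i$ and whose $j$-th column sums to $y_j$. Put $K=I\times J$ and $w_{(i,j)}=z_{ij}$, with $\rho\colon K\to I$ and $\sigma\colon K\to J$ the projections. By generalized associativity $(w_k)_{k\in K}$ is summable, and $x_i=\sum_{\rho(k)=i}w_k$, $y_j=\sum_{\sigma(k)=j}w_k$ by construction, so $(w_k)_{k\in K}$ is a common refinement.

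\emph{From common refinements to the Riesz decomposition property.} Conversely, assume any two summable families with equal sum admit a common refinement, and take $x_1,\dots,x_n,y_1,\dots,y_m$ with $\sum_i x_i=\sum_j y_j$. Choose a common refinement $(w_k)_{k\in K}$ with surjections $\rho\colon K\to[n]$ and $\sigma\colon K\to[m]$, and set $z_{ij}=\sum_{k\in\rho^{-1}(i)\cap\sigma^{-1}(j)}w_k$; this sum exists because the sets $\rho^{-1}(i)\cap\sigma^{-1}(j)$, for $j\in[m]$, partition $\rho^{-1}(i)$. Then $\sum_j z_{ij}=\sum_{k\in\rho^{-1}(i)}w_k=x_i$ and $\sum_i z_{ij}=\sum_{k\in\sigma^{-1}(j)}w_k=y_j$, so the matrix $(z_{ij})$ witnesses condition~(b) and $A$ has the Riesz decomposition property.

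I expect the only genuine technical point to be the generalized associativity lemma, which is standard; the remaining work is pure book-keeping. One minor wrinkle is the degenerate situation in which one of $I$, $J$ is empty (so the common sum is $0$): then the product projections $I\times J\to I$ and $I\times J\to J$ need not both be surjective and the construction of a common refinement above breaks down, so such families must be treated separately --- or, as is customary, one restricts attention to families indexed by nonempty finite sets.
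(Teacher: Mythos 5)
Your argument is correct and is essentially the paper's own (implicit) proof: the paper states this proposition as a direct reformulation of the matrix characterization (b), and your two translations between ``matrix with prescribed marginals'' and ``common refinement via the product projections'' are exactly the book-keeping that reformulation presupposes. Your explicit appeal to generalized associativity and your remark about the degenerate case of an empty index family (where surjectivity of a projection can fail) merely spell out details the paper leaves tacit.
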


It follows from the main result of \cite{ChoKop:BDP} that there
is a one-to-one correspondence between lattice ordered effect algebras satisfying
the Riesz decomposition property and MV-algebras, introduced by Chang 
\cite{Cha:AAoMVL} in the 1950s to give an algebraic counterpart of the many-valued Łukasiewicz
logic. It was proved by Mundici in \cite{Mun:IoAFCSAiLSC} that 
every MV-algebra is an interval in a lattice-ordered
abelian group and vice versa.

\subsection{Stone duality for finite Boolean algebras}

Recall, that the category of finite Boolean algebras is dually equivalent to the category of finite sets.
Explicitly, if $t\colon [m]\to [n]$ is a mapping of finite sets, then a dual morphism of Boolean
algebras
$\widehat t\colon 2^{[n]}\to 2^{[m]}$ is given by the rule $\widehat t(X)=t^{-1}(X)=\{j\in[m]:t(j)\in X\}$.
If $f\colon 2^{[n]}\to 2^{[m]}$ is a morphism of Boolean algebras, then for
every $j\in [m]$ there is exactly one $i\in[n]$ such that $j\in f(\{i\})$; this
$i$ is then the value of the dual map $\widehat f(j)$.

Via this duality, 
the coproduct in the category of finite Boolean algebras (denoted by $*$) is
dual to the product of finite sets.
Thus, we may exhibit $2^{[n]}*2^{[m]}$ as $2^{[n]\times[m]}$. If 
$s\colon 2^{[n]}\to 2^{[n']}$ and $t\colon 2^{[m]}\to 2^{[m']}$
are morphisms of Boolean algebras, the mapping
$s*t\colon 2^{[n]\times[m]}\to 2^{[n']\times[m']}$ is then given by the
rule
\begin{equation}
\label{eq:coproduct}
(s*t)(X)=\bigcup_{(i,j)\in X}s(\{i\})\times t(\{j\}).
\end{equation}
For our purposes, it is important to note that the sets occurring in the union in (\ref{eq:coproduct}) are pairwise disjoint. Indeed,
if $(k,l)\in (s(\{i_1\})\times t(\{j_1\}))\cap(s(\{i_2\})\times t(\{j_2\}))$
then $k\in s(\{i_1\})\cap s(\{i_2\})$ and $l\in t(\{j_1\})\cap t(\{j_2\})$
and this already implies that $i_1=i_2$ and $j_1=j_2$. Therefore, we may write
the union in (\ref{eq:coproduct}) as an effect-algebraic sum:
\begin{equation}
\label{eq:coproduct2}
(s*t)(X)=\sum_{(i,j)\in X}s(\{i\})\times t(\{j\}).
\end{equation}

\subsection{Bimorphisms, tensor products}

For effect algebras $A,B$ and $C$ a mapping $h\colon A\times B\to C$ is a $C$-valued {\em bimorphism}
\cite{dvurevcenskij1995tensor}
from $A,B$ to $C$ if and only if the following conditions are satisfied.
\begin{description}
\item[\bf Unitality:] $h(1,1)=1$.
\item[\bf Left additivity:] For all $b\in B$ and $a_1,a_2\in A$ such that $a_1\perp a_2$, $h(a_1,b)\perp h(a_2,b)$ and
$h(a_1,b)+h(a_2,b)=h(a_1+a_2,b)$.
\item[\bf Right additivity:] For all $a\in A$ and $b_1,b_2\in B$ such that $b_1\perp b_2$, $h(a,b_1)\perp h(a,b_2)$ and
$h(a,b_1)+h(a,b_2)=h(a,b_1+b_2)$.
\end{description}

It is easy to check that for every morphism of effect algebras $f\colon C\to C'$
and a bimorphism $h\colon A\times B\to C$, $f\circ h$ is a bimorphism. This fact shows that there
is a category $\beta_{A,B}$
where the objects are all bimorphisms from $A,B$ and the morphisms are $\EA$-morphisms acting on bimorphisms
from left by composition.

\begin{definition}
\cite{dvurevcenskij1995tensor}
Let $A,B$ be effect algebras. A tensor product of $A$ and $B$ 
(denoted by $A\otimes B$) is the initial object in the category $\beta_{A,B}$.
\end{definition}

The notions of bimorphism and of the tensor product of orthoalgebras were given by
Foulis and Bennett in \cite{foulis1993tensor}. It was proved by Jacobs and Mandemaker
in \cite{jacobs2012coreflections} that the category of effect algebras equipped with
the tensor products forms a symmetric monoidal category. There is another important result concerning tensor
products:
in \cite{borger2004tensor} B\"orger proved that orthomodular posets equipped
with tensor product form a symmetric monoidal category. Let us remark that B\"orger's proof
applies, almost without changes, in the more general case of effect algebras.

In the paper \cite{dvurevcenskij1995tensor}, it was assumed that $0\neq 1$ in every effect algebra.
Consequently, it might happen that there are pairs $A,B$ of effect algebras such that there is
no bimorphism $h\colon A\times B\to C$, so $A\otimes B$ does not exist. However, if we allow
for one-element effect algebras, then tensor product of effect algebras always exists and if
$A\otimes B$ has more than one element, it coincides with the tensor product as defined
in \cite{dvurevcenskij1995tensor}.

Thus, ``our'' tensor products are the same as the tensor products in the sense of \cite{dvurevcenskij1995tensor},
whenever the tensor product exists in the sense of \cite{dvurevcenskij1995tensor},
and we obtain $A\otimes B=\{0\}$ whenever tensor product does not exist in the sense of \cite{dvurevcenskij1995tensor}.

\section{Presheaves on finite Boolean algebras}

For a general background for this section, see \cite[Section I.5]{maclane2012sheaves}.

Let $\FinBool$ be the full subcategory of the category of Boolean algebras
$\Bool$ spanned by the set of objects $\{2^{[n]}\colon n\in\mathbb N\}$. The
restriction of the fully faithful functor $\Bool\to\EA$ described in the
subsection \ref{subs:boolean} to the subcategory $\FinBool$ gives us a fully faithful
functor $E\colon \FinBool\to\EA$.

The functor $R\colon \EA\to[\FinBool^{op},\Set]$ maps every effect algebra $A$ to a presheaf
$R(A)\colon \FinBool^{op}\to\Set$. The presheaf $R(A)$ maps a Boolean algebra $2^{[n]}$ to the set of
all $A$-valued observables on $2^{[n]}$:
$$
R(A)(2^{[n]})=\EA(E(2^{[n]}),A).
$$
For every morphism of Boolean algebras $f\colon 2^{[n]}\to 2^{[m]}$,
$$
R(A)(f)\colon \EA(E(2^{[m]}),A)\to\EA(E(2^{[n]}),A)
$$ is given by the rule
$(R(A)(f))(g)=g\circ E(f)$.

Recall, that for a category $\C$ and a 
presheaf $P\colon \C^{op}\to\Set$, the category of elements $\int P$ of $P$ is 
a category defined as follows:
\begin{itemize}
\item Objects are all pairs $(C,g)$, where $C$ is an object of $\C$ and $g\in P(C)$.
\item An arrow $(C,g)\to (C',g')$ is an arrow $f\colon C\to C'$ in $\C$ such that $P(f)(g')=g$.
\end{itemize}

For an effect algebra $A$, the category $\elems{A}$ is the {\em category of finite observables},
which can be explicitly described as follows:
\begin{itemize}
\item Objects are all pairs $(2^{[n]},g)$, where $g\colon E(2^{[n]})\to A$ is an observable.
\item An arrow $(2^{[n]},g)\to (2^{[n']},g')$ is a morphism of Boolean algebras $f\colon 2^{[n]}\to 2^{[n']}$
such that $g'\circ E(f)=g$.
\end{itemize}
Since $\FinBool$ is small and $\EA$ is locally small, $\elems{A}$ is small.

Note that the first component of every pair $(2^{[n]},g)\in\elems{A}$ contains 
redundant information, because $2^{[n]}$ is the domain of $g$.
Therefore, we shall mostly write simply $g$ instead of $(2^{[n]},g)$ whenever
there is no danger of confusion.
Furthermore, since $\FinBool$ is a full subcategory of $\EA$, we shall mostly 
suppress the functor $E$ from our notations. We shall write, for example, 
$g\colon 2^{[n]}\to A$ instead of $g\colon E(2^{[n]})\to A$ and $g'\circ f=g$ instead of
$g'\circ E(f)=g$.

For every presheaf $P\colon\FinBool^{op}\to\Set$, there is a projection functor
$\pi_P\colon\int P\to\FinBool$ given by $\pi_P(2^{[n]},g)=2^{[n]}$. By a general argument
\cite[Theorem I.5.2]{maclane2012sheaves} the functor 
$L\colon [\FinBool^{op},\Set]\to\EA$ given by the colimit
$$
L(P)=\varinjlim\left(\int P\xrightarrow{\pi_P}\FinBool\xrightarrow{E}\EA\right)
$$
is left adjoint to $R$.

For an effect algebra $A$, $D_A$ denotes the functor
$$
\int R(A)\xrightarrow{\pi_{R(A)}}\FinBool\xrightarrow{E}\EA.
$$
Note that $L(R(A))\simeq\varinjlim D_A$.

The following theorem was stated by Staton and Uijlen in \cite{staton2015effect}.
To keep out presentation self-contained, we give a complete proof.

\begin{theorem}
\label{thm:density}

The adjunction 
$$
\xymatrix{
[\FinBool^{op},\Set]
	\ar@/^1.2pc/[rr]^-L
&
\hspace*{-2em}\bot
&
\EA
	\ar@/^1.2pc/[ll]^-R
}
$$
 is a reflection.
 \end{theorem}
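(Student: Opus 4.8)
The plan is to show that the counit $\varepsilon_A \colon L(R(A)) \to A$ of the adjunction is an isomorphism for every effect algebra $A$; this is exactly the statement that the adjunction is a reflection (i.e.\ $R$ is fully faithful). Since $L(R(A)) \simeq \varinjlim D_A$, where $D_A\colon \int R(A) \to \EA$ sends an observable $g\colon 2^{[n]}\to A$ to $2^{[n]}$, the cocone whose leg at $g$ is $g$ itself exhibits $A$ as a cocone under $D_A$, and $\varepsilon_A$ is the induced comparison map. So I must prove that $A$, equipped with this cocone, is actually the colimit of $D_A$.

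The key steps are as follows. First, I would note that the cocone is jointly surjective: every element $a\in A$ lies in the image of the leg at the observable $\obs{a}\colon 2^{[2]}\to A$, namely $a = \obs{a}(\{1\})$. Hence the comparison map $\varepsilon_A$ is surjective onto the underlying set. Second, and this is where the real work lies, I would show $\varepsilon_A$ is injective, i.e.\ that if two elements $g(X)$ and $g'(X')$ (coming from observables $g\colon 2^{[n]}\to A$, $g'\colon 2^{[m]}\to A$) are equal in $A$, then they are identified in the colimit $\varinjlim D_A$. The standard device is to build, from the colimit cocone, a candidate effect algebra $\widehat A$ whose elements are equivalence classes of pairs $(g, X)$ under the relation generated by the arrows of $\int R(A)$, and to check directly that the partial addition, $0$, and $1$ are well-defined and satisfy (E1)--(E4), so that $\widehat A$ really is an effect algebra and the canonical map $\widehat A \to A$ is an $\EA$-morphism. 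Equality of $g(X)$ with $g'(X')$ in $A$ should then be witnessed in $\int R(A)$ by a common refinement: using the pointwise nature of observables and the description of morphisms out of $2^{[n]}$ via Stone duality, one can find a finite Boolean algebra $2^{[k]}$ with an observable $h\colon 2^{[k]}\to A$ together with Boolean-algebra maps into the domains of $g$ and $g'$ realizing both as images of $h$, forcing the classes to coincide.

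Concretely, for injectivity I would reduce to comparing singleton values: it suffices to handle $g(\{i\}) = g'(\{j\})=a$ for singletons, since arbitrary elements of the images are sums of such and the addition is respected by the cocone legs. Given such an equality, consider the observable $\obs{a}\colon 2^{[2]}\to A$ and exhibit maps $2^{[2]}\to 2^{[n]}$ and $2^{[2]}\to 2^{[m]}$ (dual to the set maps $[n]\to[2]$ collapsing everything off $i$ to the second point, resp.\ for $j$) whose composites with $g$, $g'$ equal $\obs{a}$; this connects $g$ and $g'$ through $\obs{a}$ in $\int R(A)$ and identifies the two classes. Then I would verify that the resulting bijection $\varinjlim D_A \to A$ is an isomorphism of effect algebras by checking it and its inverse preserve $+$, $0$, $1$; preservation by $\varepsilon_A$ is automatic, and preservation by the inverse follows because every summable family in $A$ is the image of a finite decomposition under a single observable.

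The main obstacle I expect is the well-definedness of the partial operation on the colimit: one must show that if $(g,X)$ and $(g,Y)$ represent orthogonal elements with $X \perp Y$ in $2^{[n]}$, the class of $(g, X\vee Y)$ depends only on the two input classes and not on the chosen common domain $2^{[n]}$ — equivalently, that any two observables through which a given pair of orthogonal elements both factor can be connected by a zig-zag in $\int R(A)$ compatible with the addition. This amalgamation property is precisely where the combinatorics of $\FinBool$ (pullbacks/pushouts of finite sets, the disjointness noted in~(\ref{eq:coproduct2})) must be used carefully, and it is the crux of the whole argument.
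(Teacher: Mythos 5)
There is a genuine gap, and it sits exactly where you park it. Your plan hinges on identifying $\varinjlim D_A$ with the ``candidate'' effect algebra $\widehat A$ of equivalence classes of pairs $(g,X)$, but in $\EA$ this identification is not a standard device and is never justified in your sketch: colimits in $\EA$ are not computed on underlying sets (the paper itself stresses that cocompleteness of $\EA$ is a nontrivial theorem, that regular epimorphisms need not be surjective, and that the forgetful functor $\EA\to\Set$ is not monadic). So even after checking that $\widehat A$ satisfies (E1)--(E4) and that the evaluation map $\widehat A\to A$ is a bijective morphism with morphism inverse, you still owe the universal property: for an \emph{arbitrary} cocone $(r_g,V)$ under $D_A$ one must show that $[(g,X)]\mapsto r_g(X)$ is a well-defined $\EA$-morphism and is the unique one compatible with the cocone legs. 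That verification --- the forced value $u(a)=r_{\obs{a}}(\{1\})$, additivity via the observable $\obs{a_1,a_2}$ and the three arrows $\obs{a_1},\obs{a_2},\obs{a_1+a_2}\to\obs{a_1,a_2}$ in $\elems{A}$, $u(1)=1$ via $\obs{\emptyset}$, and $u\circ g=r_g$ by splitting $X$ into singletons --- is in fact the entire content of the paper's proof, which never constructs the colimit at all but directly shows that the tautological cocone with apex $A$ is initial. Your proposal defers precisely this work, and in addition leaves unproved the well-definedness of the partial sum on $\widehat A$, which you yourself label the crux.

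On that crux, you are also overestimating the difficulty and looking in the wrong place: no delicate combinatorics of $\FinBool$ is needed. Along any arrow $f\colon g\to g'$ of $\elems{A}$ one has $g'(f(X))=g(X)$, so evaluation is constant on zig-zag classes, and every $(g,X)$ is connected by a single arrow to $(\obs{g(X)},\{1\})$ (the map you already use for injectivity); hence classes are exactly the fibres of evaluation, and orthogonality of $a,b\in A$ is always witnessed by the single observable $\obs{a,b}$, which settles well-definedness of $+$ immediately. With that observation and the universal-property check spelled out, your route can be completed, but as written the proposal stops short of the argument that actually proves $L(R(A))\simeq A$.
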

\begin{proof}
We need to prove that, for every effect algebra $A$ $L(R(A))\simeq A$,
that means, that $A$ is a colimit of the functor $D_A$.

It is clear that the objects of 
$\elems{A}$ indexed by themselves form a cocone with apex $A$ under $D_A$. We claim
that this cocone is initial in the category of cocones under $D_A$. We need to prove that for every other
cocone $(r_g,V)$ under $D_A$ with apex $V$ consisting of a family of 
$r_{g}$, where $(2^{[n]},g)$ runs through all objects of $\elems{A}$,
there is a unique morphism of effect algebras $u\colon A\to V$ such that $r_{g}=u\circ g$
for every object of the category $\elems{A}$.

This property already determines the only possible candidate mapping for the morphism $u\colon A\to V$. Indeed,
for $n=2$ and $g=\obs{a}$ we must have
$r_{\obs{a}}=u\circ \obs{a}$, in particular,
$$
r_{\obs{a}}(\{1\})=u(\obs{a}(\{1\}))=u(a),
$$
and we see that $u(a)=r_{\obs{a}}(\{1\})$. 
We claim that this $u\colon A\to V$ is a morphism of effect algebras and that for
every observable $g\colon 2^{[n]}\to A$ we have $r_{g}=u\circ g$.

Let $a_1,a_2\in A$ be such that $a_1+a_2$ exists in $A$. Consider the observable 
$\obs{a_1,a_2}\colon 2^{[3]}\to A$.
There are three unique morphisms $f_1,f_2,f_{1,2}\colon 2^{[2]}\to2^{[3]}$ of Boolean algebras
that make the three triangles in the diagram
\begin{equation}
\label{diag:summation}
\xymatrix{
~
&
2^{[2]}
	\ar[ld]_{f_1}
	\ar[rd]^{\obs{a_1}}
\\
2^{[3]}
	\ar[rr]^{\obs{a_1,a_2}}
&
~
&
A
\\
&
2^{[2]}
	\ar[lu]^{f_2}
	\ar[ru]_{\obs{a_2}}
\\
~
&
2^{[2]}
	\ar@/^1.2pc/[luu]^{f_{1,2}}
	\ar@/_1.2pc/[ruu]_{\obs{a_1+a_2}}
}
\end{equation}
commute. Explicitly, 
$f_1(\{1\})=\{1\}$, $f_2(\{1\})=\{2\}$ and $f_{1,2}(\{1\})=\{1,2\}$.

The commutativity of (\ref{diag:summation}) means that $f_1,f_2,f_{1,2}$
can be considered as arrows in the category $\elems{A}$:
\begin{align*}
f_1\colon\obs{a_1}&\to\obs{a_1,a_2}\\
f_2\colon\obs{a_2}&\to\obs{a_1,a_2}\\
f_{1,2}\colon\obs{a_1+a_2}&\to\obs{a_1,a_2}.
\end{align*}

Therefore, since $(r_g,V)$ is a cocone under $D_A$, we may compute
\begin{align*}
u(a_1+a_2)=
r_{\obs{a_1+a_2}}(\{1\})=
(r_{\obs{a_1,a_2}}\circ f_{1,2})(\{1\})&=\\
r_{\obs{a_1,a_2}}(\{1,2\})=r_{\obs{a_1,a_2}}(\{1\}+\{2\})&=\\
r_{\obs{a_1,a_2}}(\{1\})+r_{\obs{a_1,a_2}}(\{2\})&=\\
(r_{\obs{a_1,a_2}}\circ f_1)(\{1\})+(r_{\obs{a_1,a_2}}\circ f_2)(\{1\})&=\\
r_{\obs{a_1}}(\{1\})+r_{\obs{a_2}}(\{1\})&=u(a_1)+u(a_2)
\end{align*}
and we see that $u$ preserves $+$.

To prove that $u(1)=1$, consider the unique observable $\obs{\emptyset}\colon 2^{[1]}\to A$ and
an arrow $z\colon 2^{[2]}\to 2^{[1]}$ 
given by $z(\{1\})=\{1\}$, $z(\{2\})=\emptyset$. From the commutativity of
$$
\xymatrix{
2^{[1]}
	\ar[r]^{\obs{\emptyset}}
&
A
\\
2^{[2]}
	\ar[u]^{z}
	\ar[ru]_{\obs{1}}
}
$$
in $\EA$ it follows that
$z$ is a morphism
$z\colon \obs{1}\to \obs{\emptyset}$ in $\elems{A}$, hence 
\begin{align*}
u(1)=r_{\obs{1}}(\{1\})=(r_{\obs{\emptyset}}\circ z)(\{1\})=\\
r_{\obs{\emptyset}}(\{1\})=1.
\end{align*}

It remains to prove that  
$u$ 
is a morphism of cocones under $D_A$, that means,
$r_{g}=u\circ g$ 
for every object $g$ of the category $\elems{A}$.
Let $g\colon 2^{[n]}\to A$ and let $X=\{x_1,\dots,x_k\}\in 2^{[n]}$.
For every $i\in[k]$, let $f_i\colon (2^{[2]},\obs{g(\{x_i\})})\to (2^{[n]},g)$ be
a morphism in $\elems{A}$ given by the rules $f_i(\{1\})=\{x_i\}$,
$f_i(\{2\})=[n]\setminus\{x_i\}$. Then,
\begin{align*}
r_{g}(X)=r_{g}(\sum_{i=1}^k\{x_i\})=\\
\sum_{i=1}^k r_{g}(\{x_i\})=
\sum_{i=1}^k(r_{g\circ f_i}(\{1\}))=\\
\sum_{i=1}^k u(g(\{x_i\}))=
\sum_{i=1}^k u(g(\{x_i\}))=\\
u(g(\sum_{i=1}^k\{x_i\}))=u(g(X)).
\end{align*}
\end{proof}
We say that a category $\C$ is {\em amalgamated} if and only if 
every span in $\C$ can be extended to a commutative square.
\begin{theorem}\label{thm:rdpisamalgamated}
An effect algebra $A$ satisfies the Riesz decomposition property if and only if 
$\elems{A}$ is amalgamated.
\end{theorem}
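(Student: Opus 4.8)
The plan is to carry out the whole argument in the language of decompositions of unit, via the Stone duality for finite Boolean algebras recalled above. Under that duality an object $(2^{[n]},g)$ of $\elems{A}$ is just the finite decomposition of unit $(g(\{1\}),\dots,g(\{n\}))$, and a morphism $(2^{[n]},g)\to(2^{[m]},g')$ is the same thing as a map $\rho\colon[m]\to[n]$ with $g(\{i\})=\sum_{\rho(j)=i}g'(\{j\})$ for every $i\in[n]$. In other words, morphisms of $\elems{A}$ are refinements of decompositions of unit in a mild generalization of the notion of refinement above, in which the connecting map $\rho$ need not be surjective (an index $i$ outside the image of $\rho$ simply forces $g(\{i\})=0$). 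I would run both implications in this picture.

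For the forward implication, suppose $A$ has the Riesz decomposition property and take a span $g_1\xleftarrow{\rho_1}g\xrightarrow{\rho_2}g_2$ in $\elems{A}$, with $g$ on $2^{[n]}$, $g_\ell$ on $2^{[n_\ell]}$ and $\rho_\ell\colon[n_\ell]\to[n]$. For each $i\in[n]$ the families $(g_1(\{k\}))_{k\in\rho_1^{-1}(i)}$ and $(g_2(\{l\}))_{l\in\rho_2^{-1}(i)}$ are two summable families with the same sum $g(\{i\})$, so the matrix form of the Riesz decomposition property recalled above yields elements $z^i_{kl}\in A$ (indexed by $k\in\rho_1^{-1}(i)$, $l\in\rho_2^{-1}(i)$) whose $k$-th row sums to $g_1(\{k\})$ and whose $l$-th column sums to $g_2(\{l\})$; empty fibers cause no trouble, since a family of elements of $A$ with sum $0$ consists of zeros. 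I would then let $S=\{(k,l)\in[n_1]\times[n_2]:\rho_1(k)=\rho_2(l)\}$ and define $g'$ on $2^{S}$ by $g'(\{(k,l)\})=z^{\rho_1(k)}_{kl}$. Associativity of the effect-algebra sum shows that this family is summable with sum $1$, so $g'\in\elems{A}$; the two coordinate projections $S\to[n_1]$ and $S\to[n_2]$ are exactly the dual maps of morphisms $g_1\to g'$ and $g_2\to g'$ — this is the row/column-sum property of the $z^i_{kl}$ — and these two morphisms become equal after precomposition with $\rho_1$, resp.\ $\rho_2$, because both composites have dual map $(k,l)\mapsto\rho_1(k)=\rho_2(l)$. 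So the span is completed to a commutative square. Conceptually, $S$ is the pullback of $[n_1]\to[n]\leftarrow[n_2]$ in the category of finite sets, i.e.\ the pushout of the span in $\FinBool$, and the Riesz decomposition property is precisely what lets this pushout be lifted to $\elems{A}$.

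For the converse, suppose $\elems{A}$ is amalgamated. By the matrix characterization of the Riesz decomposition property recalled above it suffices to treat the case of two families: given $x_1,x_2,y_1,y_2\in A$ with $s:=x_1+x_2=y_1+y_2$, I must produce a $2\times 2$ matrix with row sums $x_1,x_2$ and column sums $y_1,y_2$. I would apply amalgamation to the span $\obs{x_1,x_2}\xleftarrow{\rho}\obs{s}\xrightarrow{\rho}\obs{y_1,y_2}$, where $\obs{s}$ is on $2^{[2]}$, the other two observables are on $2^{[3]}$, and the dual map $\rho\colon[3]\to[2]$ collapses $\{1,2\}$ to $1$ and sends $3$ to $2$; these are legitimate morphisms of $\elems{A}$ because $(x_1+x_2)^\perp=s^\perp=(y_1+y_2)^\perp$. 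Amalgamation gives $g'\in\elems{A}$, on $2^{[N]}$ with decomposition $(c_t)_{t\in[N]}$, together with morphisms $\obs{x_1,x_2}\to g'$ and $\obs{y_1,y_2}\to g'$ of dual maps $\sigma,\tau\colon[N]\to[3]$ making the square commute. Unwinding commutativity gives $\rho\circ\sigma=\rho\circ\tau$, i.e.\ $\sigma(t)=3\iff\tau(t)=3$ for every $t$. Putting $z_{ij}=\sum_{\sigma(t)=i,\;\tau(t)=j}c_t$ for $i,j\in\{1,2\}$, the row sum $z_{i1}+z_{i2}$ equals $\sum_{\sigma(t)=i}c_t=x_i$ — the middle step because $\sigma(t)\in\{1,2\}$ forces $\tau(t)\neq 3$ — and symmetrically the columns sum to $y_1,y_2$, all these partial sums existing by associativity. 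So $(z_{ij})$ is the required matrix.

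The step I expect to cost the most is the converse, and within it the point that the \emph{commutativity} of the amalgamating square translates into exactly the bookkeeping identity $\sigma(t)=3\iff\tau(t)=3$, which is what prevents a leftover part of $s^\perp$ from being absorbed into the rows or columns; carrying the extra coordinate $3$ (holding $s^\perp$) in the span is the device that makes this work. The forward direction is mostly routine once one recognizes $S$ as the pullback index set, the only real issue being that summability of the glued family is inherited from that of $(g(\{1\}),\dots,g(\{n\}))$ via associativity.
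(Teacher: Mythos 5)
Your proposal is correct and follows essentially the same route as the paper: the forward direction builds the amalgamating observable from per-coordinate common refinements (your matrices $z^i_{kl}$ glued over the pullback index set, which is exactly the paper's concatenation of the families $w^i_{(\cdot,\cdot)}$), and the converse applies amalgamation to the same span $\obs{x_1,x_2}\leftarrow\obs{u}\rightarrow\obs{y_1,y_2}$ built from the decompositions $(x_1,x_2,u^\perp)$ and $(y_1,y_2,u^\perp)$. The only difference is presentational: you work throughout in the Stone-dual language of index maps and spell out the bookkeeping (the condition $\sigma(t)=3\iff\tau(t)=3$) that the paper dismisses as ``easy to check.''
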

\begin{proof}
Suppose that $A$ satisfies the Riesz decomposition property. Let $g,g_1,g_2$ be
$A$-valued finite observables, let $f_1\colon g\to g_1$ and $f_2\colon g\to g_2$. Write
$g\colon 2^{[n]}\to A$; for every $i\in [n]$ consider the sets $f_1(\{i\}),f_2(\{i\})$. It is easy
to see that 
$$
(g_1(\{j\}))_{j\in f_1(\{i\})}
\qquad
(g_2(\{k\}))_{k\in f_2(\{i\})}
$$
are both finite summable families with sum equal to $g(i)$. By the Riesz
decomposition property, these families have a common refinement, let
us call it 
$$
(w^i_{(j,k)})_{(j,k)\in f_1(\{i\})\times f_2(\{j\})}.
$$ 
Concatenating all these families $(w^i_{(.,.)})$ gives us a decomposition
of unit that is easily seen to be a common refinement of the decompositions
of unit associated with the observables $g_1$ and $g_2$; let us denote
the observable associated with the common refinement by $z$.
There are morphisms $h_1\colon g_1\to z$, $h_2\colon g_2\to z$ in $\elems{A}$ associated
with the refinements of $g_1,g_2$
and, obviously, $h_1\circ f_1=h_2\circ f_2$.

Suppose that $\elems{A}$ is amalgamated. Let $u,x_1,x_2,y_2,y_2\in A$ be such
that $x_1+x_2=y_1+y_2=u$. Consider the $A$-valued finite observables
$\obs{u},\obs{x_1,x_2},\obs{y_1,y_2}$ associated with the decompositions of unit
$(u,u')$, $(x_1,x_2,u')$ and $(y_1,y_2,u')$ and equip them with the natural
arrows $f_x\colon \obs{u}\to \obs{x_1,x_2}$ and $f_y\colon \obs{u}\to \obs{y_1,y_2}$.  Since
$\elems{A}$ is amalgamated, the span $\obs{x_1,x_2},\obs{y_1,y_2}$ extends to a
commutative square, so there is an $A$-valued finite observable $g$ and
morphisms of observables $h_x\colon \obs{x_1,x_2}\to g$ and $h_y\colon \obs{y_1,y_2}\to g$ such
that $h_x\circ f_x=h_y\circ f_y$. It is easy to check that $h_x,h_y$ give us the
desired common refinement of the summable sequences $(x_1,x_2)$ and $(y_1,y_2)$.
\end{proof}
\begin{theorem}
\label{thm:oacoequalizes}
An effect algebra $A$ is an orthoalgebra if and only if
for every pair of morphisms $f_1,f_2\colon g\to g'$ in $\elems{A}$
there is a coequalizing morphism $q\colon g'\to u$ such that $q\circ f_1=q\circ f_2$. 
\end{theorem}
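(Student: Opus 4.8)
The plan is to pass everything through the Stone duality for finite Boolean algebras and reduce to a question about summable families. Write an arrow $f_\ell\colon g\to g'$ of $\elems A$ with $g\colon 2^{[n]}\to A$ and $g'\colon 2^{[m]}\to A$ as $f_\ell=\widehat{t_\ell}$ for a map of finite sets $t_\ell\colon[m]\to[n]$; the condition $g'\circ f_\ell=g$ then says exactly that $g(\{i\})=\sum_{t_\ell(j)=i}g'(\{j\})$ for every $i\in[n]$ and $\ell\in\{1,2\}$. Dually, a coequalizing arrow $q\colon g'\to u$ amounts to a map $s\colon[k]\to[m]$ (with $q=\widehat s$) together with an observable $u\colon 2^{[k]}\to A$ satisfying $\sum_{s(l)=j}u(\{l\})=g'(\{j\})$ for all $j\in[m]$, and the requirement $q\circ f_1=q\circ f_2$ becomes $t_1\circ s=t_2\circ s$.

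For the ``only if'' part, assume $A$ is an orthoalgebra. First I would prove the key lemma: \emph{if $t_1(j_0)\neq t_2(j_0)$ for some $j_0\in[m]$, then $g'(\{j_0\})=0$.} Put $a=g'(\{j_0\})$, $i_1=t_1(j_0)$, $i_2=t_2(j_0)$. Since $(g(\{i\}))_{i\in[n]}$ is a finite decomposition of unit and $i_1\neq i_2$, the sum $g(\{i_1\})+g(\{i_2\})$ exists, i.e. $g(\{i_1\})\leq g(\{i_2\})^\perp$. The $t_1$-equation at $i_1$ gives $a\leq g(\{i_1\})$ (as $j_0\in t_1^{-1}(i_1)$) and the $t_2$-equation at $i_2$ gives $a\leq g(\{i_2\})$; hence $a\leq g(\{i_1\})\leq g(\{i_2\})^\perp\leq a^\perp$, so $a\perp a$ and therefore $a=0$. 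Granting the lemma, set $N=\{j\in[m]\colon g'(\{j\})\neq0\}$ and take $s\colon[k]\to[m]$ ($k=|N|$) to be an injection with image $N$. By the lemma $t_1$ and $t_2$ agree on $N$, and $(g'(\{j\}))_{j\in N}$ is again a decomposition of unit, so it determines an observable $u\colon 2^{[k]}\to A$ with $u(\{l\})=g'(s(l))$. Then $q=\widehat s\colon 2^{[m]}\to 2^{[k]}$ satisfies $u\circ q=g'$ (the summands indexed outside $N$ are $0$), so $q\colon g'\to u$ is an arrow of $\elems A$, and $t_1\circ s=t_2\circ s$ (because $\mathrm{im}(s)=N$ and $t_1,t_2$ agree on $N$), i.e. $q\circ f_1=q\circ f_2$.

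For the ``if'' part I would argue by contraposition: suppose $A$ is not an orthoalgebra, so there is $a\neq0$ with $a\perp a$. Take $g'=\obs{a,a}\colon 2^{[3]}\to A$, with decomposition of unit $(a,a,(a+a)^\perp)$, and $g=\obs a\colon 2^{[2]}\to A$. Setting $f_1(\{1\})=\{1\}$ and $f_2(\{1\})=\{2\}$ (with $f_\ell(\{2\})$ the complement) defines two \emph{distinct} arrows $f_1,f_2\colon\obs a\to\obs{a,a}$ of $\elems A$ (a direct check uses $a+(a+a)^\perp=a^\perp$), whose duals $t_1,t_2\colon[3]\to[2]$ satisfy $t_1(1)=1\neq2=t_2(1)$. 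If some $q\colon\obs{a,a}\to u$ coequalized $f_1$ and $f_2$, then, writing $q=\widehat s$ with $s\colon[k]\to[3]$, the factorization $u\circ q=\obs{a,a}$ would force $\sum_{s(l)=1}u(\{l\})=a\neq0$, so $s(l_0)=1$ for some $l_0$; but then $t_1(s(l_0))=1\neq2=t_2(s(l_0))$, contradicting $t_1\circ s=t_2\circ s$. Hence this pair admits no coequalizing arrow.

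The only real obstacle is the lemma in the forward direction — the observation that orthogonality of the two ``slots'' $g(\{i_1\})$ and $g(\{i_2\})$ forces $a\leq a^\perp$; after that, a coequalizing arrow is produced simply by discarding the zero atoms of $g'$, and the converse is an explicit witness. The remaining points are routine: that deleting the zero members of a finite summable family leaves a summable family with the same sum, and that $\widehat s$ is a morphism of finite Boolean algebras for any map $s$ of finite sets (functoriality of Stone duality).
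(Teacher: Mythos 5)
Your proof is correct, and in the ``only if'' direction it takes a genuinely different route from the paper. The paper's argument there is categorical: since $A$ is an orthoalgebra, the range of $g'$ is a Boolean subalgebra $B$ of $A$, so $g$ and $g'$ factor through an embedding $j\colon B\to A$ as $j\circ h$, $j\circ h'$; one then takes the coequalizer $q$ of $f_1,f_2$ in $\Bool$ and uses its universal property (applied to $h'$, which coequalizes $f_1,f_2$ because $j$ is monic) to obtain $u$ with $h'=u\circ q$, so that $q\colon g'\to j\circ u$ is the required arrow of $\elems{A}$. You instead work entirely through Stone duality and prove the pointwise lemma that the dual set maps $t_1,t_2$ can disagree only at atoms $j$ with $g'(\{j\})=0$ (via $a\leq g(\{i_1\})\leq g(\{i_2\})^\perp\leq a^\perp$), and then build the coequalizing arrow by discarding the zero atoms of $g'$. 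Your version is more elementary and self-contained --- it does not invoke the fact that ranges of observables into orthoalgebras are Boolean subalgebras, which the paper cites without proof --- and it makes explicit exactly where orthogonality is used; the paper's version is shorter granted that fact and exhibits $q$ as an actual coequalizer in $\Bool$, which is conceptually tidy. Note that your construction (like the paper's) silently uses the degenerate object $2^{[0]}$ when $N=\emptyset$, which happens only for the one-element $A$ and is covered by the paper's conventions allowing $0=1$. Your ``if'' direction is essentially the paper's: the same witnesses $\obs{a}$, $\obs{a,a}$, $f_1,f_2$, with the contradiction phrased dually on set maps rather than by computing $q(\{1\})=q(\{2\})=0$ in $2^{[n]}$.
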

\begin{proof}
Suppose that $A$ is an orthoalgebra. 
Let $g\colon 2^{[n]}\to A$ and
$g'\colon 2^{[m]}\to A$ be finite observables, let $f_1,f_2\colon g\to g'$ in $\obs{A}$.
Since $A$ is an orthoalgebra, the range of every $A$-valued
observable is a Boolean subalgebra of $A$. Therefore, there exists
a Boolean algebra $B$ (for example the range of $g'$) and an embedding $j\colon B\to A$ such that $g,g'$
factor through $j$. That means, there are morphisms of effect algebras $h,h'$ such that the diagram
\begin{equation}
\xymatrix@C=2em@R=3em{
2^{[n]}
	\ar@<-.5ex>[dd]_{f_1}
	\ar@<.5ex>[dd]^{f_2}
	\ar[rd]^h
	\ar[rrrd]^g
&
~
&
~
\\
~
&
B
	\ar@{^{(}->}[rr]^j
&
~
&
A
\\
2^{[m]}
	\ar[ru]_{h'}
	\ar[rrru]_{g'}
}
\end{equation}
commutes in $\EA$.

Let $q\colon 2^{[m]}\to 2^{[k]}$ be a coequalizer of 
the pair $f_1,f_2$ in the category $\Bool$. As $q$ is a coequalizer and 
$h=h'\circ f_1=h'\circ f_2$ in $\Bool$, there is a (unique) morphism of Boolean algebras
$u\colon 2^{[k]}\to B$ such that $h'=u\circ q$, hence the diagram 
\begin{equation}
\xymatrix@C=4em{
2^{[n]}
	\ar@<-.5ex>[d]_{f_1}
	\ar@<.5ex>[d]^{f_2}
	\ar[rd]^h
&
~
\\
2^{[m]}
	\ar[r]_{h'}
	\ar[d]_q
&
B
\\
2^{[k]}
	\ar[ru]_u
}
\end{equation}
commutes in $\EA$.
Therefore, $g'=j\circ h'=j\circ u\circ q$ or,
in other words, $q\colon g'\to j\circ u$ is the arrow in $\elems{A}$ with the property $q\circ f_1=q\circ f_2$.

Suppose that $A$ is an effect algebra such that 
for every pair of morphisms $f_1,f_2\colon g\to g'$ in $\elems{A}$
there is a morphism $q\colon g'\to h$ such that $q\circ f_1=q\circ f_2$. 
Let $a\in A$ be such that $a\perp a$. We need to prove that $a=0$.
Let $f_1,f_2\colon 2^{[2]}\to 2^{[3]}$ be such that $f_1(\{1\})=\{1\}$ and
$f_2(\{1\})=\{2\}$. Then $\obs{a}=\obs{a,a}\circ f_1=\obs{a,a}\circ f_2$ in $\EA$, that means,
$f_1,f_2\colon \obs{a}\to \obs{a,a}$ in $\elems{A}$. By assumption, there is an arrow
$q\colon 2^{[3]}\to 2^{[n]}$ such that $q\circ f_1=q\circ f_2$ 
and an observable $u\colon 2^{[n]}\to A$ such that $u\circ q=\obs{a,a}$.
Thus, the following diagram commutes in $\EA$:
\begin{equation}
\xymatrix@C=4em{
2^{[2]}
	\ar@<-.5ex>[d]_{f_1}
	\ar@<.5ex>[d]^{f_2}
	\ar[rd]^{\obs{a}}
&
~
\\
2^{[3]}
	\ar[r]_{\obs{a,a}}
	\ar[d]_q
&
A
\\
2^{[n]}
	\ar[ru]_u
}
\end{equation}
This implies that $q(\{1\})=q(f_1(\{1\}))=q(f_2(\{1\}))=q(\{2\})$. On the other hand,
since $\{1\}\perp\{2\}$ in $2^{[3]}$, $q(\{1\})\perp q(\{2\})$ in $2^{[n]}$. Since
$2^{[n]}$ is a Boolean algebra, it is an orthoalgebra,
hence $q(\{1\})\perp q(\{2\})$ and $q(\{1\})=q(\{2\})$ imply 
$q(\{1\})=q(\{2\})=0$. Finally,
$$
a=\obs{a}(\{1\})=\obs{a,a}(f_1(\{1\}))=\obs{a,a}(\{1\})=u(q(\{1\}))=u(0)=0.
$$
\end{proof}
Recall, that a category $\C$ is called {\em filtered} if and only if the
following conditions are satisfied.
\begin{itemize}
\item $\C$ is nonempty.
\item For every pair of objects $X_1,X_2$ there is a cospan 
$$
\xymatrix{
~
&
X
&
~\\
X_1 
	\ar[ru]
& 
~ 
& 
X_2
	\ar[lu]
}
$$
over them.
\item 
For every parallel pair of
morphisms $f_1,f_2:X\to Y$ in there exists a morphism $q:Y\to Z$ such that $q\circ f_1=q\circ f_2$.
\end{itemize}

\begin{corollary}
An effect algebra $A$ is a Boolean algebra if and only if $\elems{A}$ is filtered.
\end{corollary}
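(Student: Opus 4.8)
The plan is to reduce the statement to two ingredients: the two preceding theorems, and the purely algebraic fact that an effect algebra is a Boolean algebra precisely when it is an orthoalgebra satisfying the Riesz decomposition property.

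First I would record that $\elems{A}$ always has an initial object, namely $(2^{[1]},\obs{\emptyset})$, since the two-element Boolean algebra $2^{[1]}$ is the initial object of $\FinBool$ (indeed of $\EA$), so there is exactly one arrow from $(2^{[1]},\obs{\emptyset})$ to any object of $\elems{A}$. In particular $\elems{A}$ is never empty, and over any pair $g_1,g_2$ of objects there is a span $g_1\leftarrow(2^{[1]},\obs{\emptyset})\to g_2$. Using this I would prove the reformulation of filteredness: \emph{$\elems{A}$ is filtered if and only if $\elems{A}$ is amalgamated and every parallel pair in $\elems{A}$ admits a coequalizing arrow}. For the forward direction the coequalizer clause is part of the definition of filtered, and amalgamation follows by taking a span $g_1\xleftarrow{h_1}g\xrightarrow{h_2}g_2$, choosing a cospan $g_1\xrightarrow{k_1}g'\xleftarrow{k_2}g_2$ over $g_1,g_2$, and coequalizing the parallel pair $k_1h_1,k_2h_2\colon g\to g'$ to make the resulting square commute. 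For the converse, amalgamating the span $g_1\leftarrow(2^{[1]},\obs{\emptyset})\to g_2$ produces a cospan over any pair, and the two remaining clauses in the definition of filtered are nonemptiness (the initial object) and the coequalizer condition. Applying Theorems~\ref{thm:rdpisamalgamated} and~\ref{thm:oacoequalizes} to this reformulation gives: $\elems{A}$ is filtered iff $A$ satisfies the Riesz decomposition property and $A$ is an orthoalgebra.

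It then remains to prove that $A$ is a Boolean algebra iff $A$ is an orthoalgebra with the Riesz decomposition property. The forward direction is routine: a Boolean algebra is an orthoalgebra ($a\perp a$ forces $a\wedge a=0$, i.e.\ $a=0$) and has the Riesz decomposition property, since if $u\le v_1\vee v_2$ with $v_1\wedge v_2=0$ one takes $u_1=u\wedge v_1$ and $u_2=u\wedge v_2$, using distributivity. For the converse, let $A$ be an orthoalgebra with RDP. The crucial small fact is: if $x\le y$ and $x\perp y$, then $x=0$, because $x+y$ exists and $y=x+(y-x)$, so by associativity $x+x$ exists, i.e.\ $x\perp x$, whence $x=0$. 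With this in hand, a few applications of RDP give that (i) for orthogonal $a,b$ the sum $a+b$ is the join $a\vee b$ (given an upper bound $c=a+(c-a)$, decompose $b\le a+(c-a)$ by RDP and kill the summand below $a$); (ii) $A$ is a lattice, with $a\wedge b$ read off from an RDP-decomposition of $b\le 1=a+a^\perp$ by discarding the summand below $a^\perp$; (iii) $A$ is orthomodular, because for $a\le b$ the same manipulation shows $b-a=b\wedge a^\perp$ and hence $b=a\vee(b\wedge a^\perp)$; and (iv) every pair $a,b$ is compatible, since RDP applied to $a\le 1=b+b^\perp$ yields $a=(a\wedge b)\vee(a\wedge b^\perp)$. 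A lattice-ordered orthoalgebra all of whose pairs of elements are compatible is a Boolean algebra \cite{Kal:OL}. Combining the two equivalences (and, for ``Boolean $\Rightarrow$ filtered'', also the initial object of $\elems{A}$) yields the corollary.

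The main obstacle is the algebraic lemma ``orthoalgebra $+$ RDP $\Rightarrow$ Boolean'': once the small fact about an element that is both below and orthogonal to another is isolated, the lattice structure, orthomodularity and pairwise compatibility all follow from bookkeeping with RDP, and the final step (a compatible orthomodular lattice is Boolean) is classical. The only other point that needs care is that the implication ``amalgamated $\Rightarrow$ there is a cospan over every pair of objects'' genuinely uses the existence of the initial object $(2^{[1]},\obs{\emptyset})$ of $\elems{A}$, not merely that $\elems{A}$ is nonempty.
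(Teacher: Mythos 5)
Your argument is correct and follows essentially the same route as the paper: the paper's proof likewise combines Theorems~\ref{thm:rdpisamalgamated} and~\ref{thm:oacoequalizes} with the initial object $\obs{\emptyset}\colon 2^{[1]}\to A$ of $\elems{A}$ to identify filteredness with amalgamation plus the coequalizing condition, and invokes the fact that an effect algebra is Boolean iff it is an orthoalgebra with the Riesz decomposition property. The only difference is that you spell out a proof of that algebraic equivalence (which the paper states without proof); your sketch is sound, needing at most the small additional remark that $a\wedge b=0$ implies $a\perp b$ (by decomposing $b\le a+a^\perp$), so that the partial sum of $A$ really is the canonical Boolean one.
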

\begin{proof}
An effect algebra $A$ is a Boolean algebra if and only if $A$ satisfies the
Riesz decomposition property and $A$ is an orthoalgebra.  The rest of the proof
follows easily by Theorem \ref{thm:rdpisamalgamated} and Theorem
\ref{thm:oacoequalizes} using the fact that $\elems{A}$ has an initial object
$\obs{\emptyset}\colon 2^{[1]}\to A$.
\end{proof}

\section{Tensor products}

Let $A$, $B$ be effect algebras. The category $\elems{A}\times\elems{B}$ has pairs of finite
observables as objects and pairs of morphisms of observables as arrows. Consider the functor
$D_{A,B}\colon \elems{A}\times\elems{B}\to\EA$ given by the rule
$$
D_{A,B}(g_A,g_B)=\Dom(g_A)*\Dom(g_B),
$$
where $*$ denotes free product (that means, coproduct in $\Bool$) of Boolean algebras.

\begin{lemma}
Let $A,B$ be effect algebras.
\label{lemma:tensors:colimits}
The category of bimorphisms $\beta_{A,B}$ is isomorphic to the category of cocones under the diagram $D_{A,B}$.
Under this isomorphism, $C$-valued bimorphisms correspond to cocones with apex $C$ and vice versa. 
\end{lemma}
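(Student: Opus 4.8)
The plan is to construct, for fixed effect algebras $A,B$, a pair of functors $\Phi\colon\beta_{A,B}\to\mathcal K$ and $\Psi\colon\mathcal K\to\beta_{A,B}$, where $\mathcal K$ denotes the category of cocones under $D_{A,B}$, which are mutually inverse and act as the identity on the underlying $\EA$-morphisms; since the apex of $\Phi(h)$ will be the codomain of $h$, this gives the desired isomorphism of categories matching $C$-valued bimorphisms with cocones of apex $C$. To define $\Phi$ on a $C$-valued bimorphism $h$ and an object $(g_A,g_B)$ of $\elems A\times\elems B$, write $\Dom(g_A)=2^{[n]}$ and $\Dom(g_B)=2^{[m]}$; the family $\bigl(h(g_A(\{i\}),g_B(\{j\}))\bigr)_{(i,j)\in[n]\times[m]}$ is a decomposition of unit in $C$, because summing over $i$ first gives, by left additivity and the summability of $(g_A(\{i\}))_i$, the family $(h(1,g_B(\{j\})))_j$, which sums to $1$ by right additivity and unitality, and an appeal to (E2) makes the whole family summable with the same sum. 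This family therefore determines an observable
$$
\Phi(h)_{(g_A,g_B)}\colon \Dom(g_A)*\Dom(g_B)=2^{[n]\times[m]}\to C,\qquad X\mapsto\sum_{(i,j)\in X}h(g_A(\{i\}),g_B(\{j\})),
$$
and on arrows we put $\Phi(k)=k$.

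To verify the cocone condition, fix a morphism $(s,t)\colon(g_A,g_B)\to(g_A',g_B')$ in $\elems A\times\elems B$, so that $g_A'\circ s=g_A$ and $g_B'\circ t=g_B$. Using (\ref{eq:coproduct2}) to write $(s*t)(X)$ as the effect-algebraic sum $\sum_{(i,j)\in X}s(\{i\})\times t(\{j\})$, then using right additivity to collapse the sum over $l\in t(\{j\})$ and left additivity to collapse the sum over $k\in s(\{i\})$, one obtains $\Phi(h)_{(g_A',g_B')}\bigl(s(\{i\})\times t(\{j\})\bigr)=h\bigl(g_A'(s(\{i\})),g_B'(t(\{j\}))\bigr)=h(g_A(\{i\}),g_B(\{j\}))$; summing over $(i,j)\in X$ yields $\Phi(h)_{(g_A',g_B')}\circ(s*t)=\Phi(h)_{(g_A,g_B)}$. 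That $\Phi(k)=k$ is a morphism of cocones is immediate from the displayed formula since $k$ preserves $+$, and $\Phi$ plainly preserves composition and identities.

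Conversely, for a cocone $(\lambda,C)$ define $\Psi(\lambda)(a,b):=\lambda_{(\obs a,\obs b)}(\{(1,1)\})$ for $a\in A$, $b\in B$, using the identification $2^{[2]}*2^{[2]}=2^{[2]\times[2]}$, and $\Psi(k)=k$ on arrows. The essential point is that a cocone is already determined by these atomic values: for $(g_A,g_B)$ as above and $(i,j)\in[n]\times[m]$, the maps $s_i\colon2^{[2]}\to2^{[n]}$ with $s_i(\{1\})=\{i\}$, $s_i(\{2\})=[n]\setminus\{i\}$, and the analogous $t_j\colon2^{[2]}\to2^{[m]}$, form a morphism $(s_i,t_j)\colon(\obs{g_A(\{i\})},\obs{g_B(\{j\})})\to(g_A,g_B)$ in $\elems A\times\elems B$, and since $(s_i*t_j)(\{(1,1)\})=\{(i,j)\}$ the cocone condition gives $\lambda_{(g_A,g_B)}(\{(i,j)\})=\Psi(\lambda)(g_A(\{i\}),g_B(\{j\}))$, hence $\lambda_{(g_A,g_B)}(X)=\sum_{(i,j)\in X}\Psi(\lambda)(g_A(\{i\}),g_B(\{j\}))$ for every $X$. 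It remains to check that $\Psi(\lambda)$ is a bimorphism. Unitality follows by applying the cocone condition to $(z,z)\colon(\obs1,\obs1)\to(\obs\emptyset,\obs\emptyset)$, where $z\colon2^{[2]}\to2^{[1]}$ sends $\{1\}\mapsto\{1\}$ and $\{2\}\mapsto\emptyset$: then $(z*z)(\{(1,1)\})$ is the top element of $2^{[1]}*2^{[1]}$, which $\lambda_{(\obs\emptyset,\obs\emptyset)}$ sends to $1$. For left additivity, given $a_1\perp a_2$ in $A$ and $b\in B$, the atomic formula for $(\obs{a_1,a_2},\obs b)$ gives $\Psi(\lambda)(a_1,b)=\lambda_{(\obs{a_1,a_2},\obs b)}(\{(1,1)\})$ and $\Psi(\lambda)(a_2,b)=\lambda_{(\obs{a_1,a_2},\obs b)}(\{(2,1)\})$, which are orthogonal with sum $\lambda_{(\obs{a_1,a_2},\obs b)}(\{(1,1),(2,1)\})$; applying the cocone condition to $(f_{1,2},\id)\colon(\obs{a_1+a_2},\obs b)\to(\obs{a_1,a_2},\obs b)$ with $f_{1,2}$ as in (\ref{diag:summation}), and noting $(f_{1,2}*\id)(\{(1,1)\})=\{(1,1),(2,1)\}$, identifies this sum with $\lambda_{(\obs{a_1+a_2},\obs b)}(\{(1,1)\})=\Psi(\lambda)(a_1+a_2,b)$. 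Right additivity is the mirror argument.

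Finally, $\Psi\circ\Phi=\mathrm{id}$ because $\Psi(\Phi(h))(a,b)=\Phi(h)_{(\obs a,\obs b)}(\{(1,1)\})=h(\obs a(\{1\}),\obs b(\{1\}))=h(a,b)$, and $\Phi\circ\Psi=\mathrm{id}$ because $\Phi(\Psi(\lambda))_{(g_A,g_B)}(X)=\sum_{(i,j)\in X}\Psi(\lambda)(g_A(\{i\}),g_B(\{j\}))=\lambda_{(g_A,g_B)}(X)$ by the atomic formula; on arrows both composites are the identity. Therefore $\Phi$ is an isomorphism of categories carrying $C$-valued bimorphisms to cocones with apex $C$ and back. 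The only genuinely delicate point I anticipate is keeping the combinatorics of the coproduct maps $s*t$ on atoms, via (\ref{eq:coproduct2}), aligned with the bilinearity reductions; the summability bookkeeping for the relevant families in $C$ is routine but must be made explicit.
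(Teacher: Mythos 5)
Your proof is correct and follows essentially the same route as the paper: the same formula $\sum_{(i,j)\in X}h(g_A(\{i\}),g_B(\{j\}))$ for the cocone component, the same atomic evaluation $h(a,b)=\lambda_{(\obs a,\obs b)}(\{(1,1)\})$ in the other direction, and the same use of (\ref{eq:coproduct2}) and of the maps $f_1,f_2,f_{1,2}$ for additivity. The only difference is that you make explicit, via your ``atomic formula,'' the check that the two constructions are mutually inverse, which the paper omits as routine.
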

\begin{proof}
We shall describe how to construct a cocone under $D_{A,B}$ with apex $C$ from a $C$-valued bimorphism
and vice versa so that the constructions are mutually inverse.

Let $h\colon A\times B\to C$ be a bimorphism. 
We need to construct a cocone under $D_{A,B}$ associated to $h$.
So for every pair of finite observables $g_A\colon 2^{[n]}\to A$ and
$g_B\colon 2^{[m]}\to B$, we need to define a morphism $v_{g_A,g_B}\colon 2^{[n]}*2^{[m]}\to C$, that will be
the component of our cocone at $(g_A,g_B)$. Note that $2^{[n]}*2^{[m]}\simeq 2^{[n]\times[m]}$ and
$$
1=h(1,1)=h(\sum_{i\in[n]}g_A(\{i\}),\sum_{j\in[m]}g_B(\{j\}))=
\sum_{\substack{i\in[n]\\j\in[m]}}h(g_A(\{i\}),g_B(\{j\})),
$$
hence $(h(g_A(\{i\}),g_B(\{j\})))_{(i,j)\in[n]\times[m]}$ is a
$[n]\times[m]$-indexed decomposition of unit in $C$.
Therefore, $v_{g_A,g_B}\colon 2^{[n]\times[m]}\to C$ given by
$$
v_{g_A,g_B}(X)=\sum_{(i,j)\in X}h(g_A(\{i\}),g_B(\{j\}))
$$
is a finite observable. To prove that the family of all such $v_{.,.}$ forms a cocone under
the diagram $D_{A,B}$, let $(f_A,f_B)\colon (g_A,g_B)\to (g'_A,g'_B)$ be an arrow in $\elems{A}\times\elems{B}$.
We need to prove, for all $X\in\Dom(g_A)*\Dom(g_B)$,
$$v_{g'_A,g'_B}((f_A*f_B)(X))=v_{g_A,g_B}(X).$$
By (\ref{eq:coproduct2}) and the fact that $v_{g'_A,g'_B}$ is a 
morphism in $\EA$, the left-hand side expands to
\begin{align}
\notag
v_{g'_A,g'_B}((f_A*f_B)(X))=v_{g'_A,g'_B}(\sum_{(i,j)\in X}f_A(\{i\})\times f_B(\{j\}))=\\
\label{eq:contd}
\sum_{(i,j)\in X}v_{g'_A,g'_B}(f_A(\{i\})\times f_B(\{j\})).
\end{align}
For every $(i,j)\in X$,
\begin{align*}
v_{g'_A,g'_B}(f_A(\{i\})\times f_B(\{j\}))=
\sum_{(k,l)\in f_A(\{i\})\times f_B(\{j\})} h(g'_A(\{k\}),g'_B(\{l\}))=\\
h\bigl(\sum_{k\in f_A(\{i\})}g'_A(\{k\}),\sum_{l\in f_B(\{j\})}g'_B(\{l\})\bigr)=
h(g_A(\{i\}),g_B(\{j\})).
\end{align*}
Continuing the computation (\ref{eq:contd}),
\begin{align*}
\sum_{(i,j)\in X}v_{g'_A,g'_B}(f_A(\{i\})\times f_B(\{j\}))=
\sum_{(i,j)\in X}h(g_A(\{i\}),g_B(\{j\}))=v_{g_A,g_B}(X)
\end{align*}
In this way, every $C$-valued bimorphism gives us a cocone under $D_{A,B}$ with apex $C$.

Let $(v_{.,.})$ be a cocone under $D_{A,B}$. For $(a,b)\in A\times B$, put $h(a,b)=v_{\obs{a},\obs{b}}(\{(1,1)\})$.
We claim that $h$ is a bimorphism.

There is a unique morphism of Boolean algebras 
$u\colon 2^{[2]}\to 2^{[1]}$ with $u(\{1\})=\{1\}$ that makes both diagrams
\begin{equation}
\label{diag:u}
\xymatrix{
2^{[1]}
	\ar[r]^{\obs{\emptyset}}
&
A
\\
2^{[2]}
	\ar[u]^u
	\ar[ur]_{\obs{1}}
}
\qquad
\xymatrix{
2^{[1]}
	\ar[r]^{\obs{\emptyset}}
&
B
\\
2^{[2]}
	\ar[u]^u
	\ar[ur]_{\obs{1}}
}
\end{equation}
commute.
This implies the commutativity of the diagram
\begin{equation}
\label{diag:u2}
\xymatrix{
2^{[1]}*2^{[1]}
	\ar[r]^-{v_{\obs{\emptyset},\obs{\emptyset}}}
&
C
\\
2^{[2]}*2^{[2]}
	\ar[u]^{u*u}
	\ar[ur]_{v_{\obs{1},\obs{1}}}
}
\end{equation}
Note that $2^{[1]}*2^{[1]}\simeq 2^{[1]}$ and $2^{[1]}$ is initial in $\EA$, 
hence $v_{\obs{\emptyset},\obs{\emptyset}}=\obs{\emptyset}$. Using (\ref{diag:u2}) we may now
compute
$$
h(1,1)=v_{\obs{1},\obs{1}}(\{(1,1)\})=
v_{\obs{\emptyset},\obs{\emptyset}}((u*u)(\{(1,1)\}))=v_{\obs{\emptyset},\obs{\emptyset}}(\{(1,1)\})=1
$$
Let $a_1,a_2\in A$, $b\in B$.
Let $f_1,f_2,f_{1,2}$ be exactly as in the proof of Theorem \ref{thm:density}, diagram
(\ref{diag:summation}).
If we pair the observables in (\ref{diag:summation}) with the observable $\obs{b}\colon 2^{[2]}\to B$,
we obtain a commutative diagram in $\elems{A}\times\elems{B}$ that gives rise to the
following
part of the cocone $v_{.,.}$:
\begin{equation}
\label{diag:summation2}
\xymatrix@C=4pc{
~
&
2^{[2]\times[2]}
	\ar[ld]_{f_1*\id}
	\ar[rd]^{v_{\obs{a_1},\obs{b}}}
\\
2^{[3]\times[2]}
	\ar[rr]^{v_{\obs{a_1,a_2},\obs{b}}}
&
~
&
A
\\
&
2^{[2]\times[2]}
	\ar[lu]^{f_2*\id}
	\ar[ru]_{v_{\obs{a_2},\obs{b}}}
\\
~
&
2^{[2]\times[2]}
	\ar@/^1.2pc/[luu]^{f_{1,2}*\id}
	\ar@/_1.5pc/[ruu]_{v_{\obs{a_1+a_2},\obs{b}}}
}
\end{equation}

Note that 
\begin{align*}
(f_{1,2}*\id)\bigl(\{(1,1)\}\bigr)=\{(1,1),(2,1)\}=\{(1,1)\}+\{(2,1)\}=\\(f_1*\id)\bigl(\{(1,1)\}\bigr)+(f_2*\id)\bigl(\{(1,1)\}\bigr)
\end{align*}
and we can compute
\begin{align*}
h(a_1+a_2,b)=&v_{\obs{a_1+a_2},\obs{b}}\bigl(\{(1,1)\}\bigr)=\bigl(v_{\obs{a_1,a_2},\obs{b}}\circ (f_{1,2}*\id)\bigr)\bigl(\{(1,1)\}\bigr)=\\
&v_{\obs{a_1,a_2},\obs{b}}\bigl((f_1*\id)\bigl(\{(1,1)\}\bigr)+(f_2*\id)\bigl(\{(1,1)\}\bigr)\bigr)=\\
&v_{\obs{a_1,a_2},\obs{b}}\bigl((f_1*\id)\bigl(\{(1,1)\}\bigr)\bigr)+v_{\obs{a_1,a_2},\obs{b}}\bigl((f_2*\id)\bigl(\{(1,1)\}\bigr)\bigr)=\\
&v_{\obs{a_1},\obs{b}}\bigl(\{(1,1)\}\bigr)+v_{\obs{a_2},\obs{b}}\bigl(\{(1,1)\}\bigr)=h(a_1,b)+h(a_2,b).
\end{align*}

The proof of the right additivity of $h$ is analogous.

We should now check that this one-to-one correspondence between cocones under $D_{A,B}$ and
the objects of $\beta_{A,B}$ is functorial and the functors are mutually inverse. This
part of the proof is very straightforward and is thus omitted.
\end{proof}
\begin{corollary}
\label{coro:tensoriscolimit}
For every pair $A,B$ of effect algebras,
$$
A\otimes B=\varinjlim D_{A,B}
$$
\end{corollary}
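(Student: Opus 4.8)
The plan is to derive Corollary~\ref{coro:tensoriscolimit} as an essentially formal consequence of Lemma~\ref{lemma:tensors:colimits}, using only the universal properties involved. By Lemma~\ref{lemma:tensors:colimits}, the category $\beta_{A,B}$ of bimorphisms is isomorphic (not merely equivalent) to the category $\mathrm{Cocone}(D_{A,B})$ of cocones under the diagram $D_{A,B}\colon\elems{A}\times\elems{B}\to\EA$, in such a way that $C$-valued bimorphisms correspond to cocones with apex $C$. First I would note that, since $\EA$ is cocomplete (as recalled in the preliminaries, citing \cite{jacobs2012coreflections}), the colimit $\varinjlim D_{A,B}$ exists in $\EA$; call it $T$, with its universal cocone $(\lambda_{g_A,g_B})$. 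The defining property of the colimit is precisely that this universal cocone is an initial object in $\mathrm{Cocone}(D_{A,B})$.

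Next I would transport this initiality across the isomorphism of categories given by Lemma~\ref{lemma:tensors:colimits}. An isomorphism of categories carries initial objects to initial objects, so the bimorphism $h_T\colon A\times B\to T$ corresponding to the universal cocone $(\lambda_{g_A,g_B})$ is an initial object in $\beta_{A,B}$. By the Definition of the tensor product (the initial object in $\beta_{A,B}$), this says exactly that $T\simeq A\otimes B$, i.e.\ $A\otimes B=\varinjlim D_{A,B}$, and that the associated universal bimorphism is the canonical bimorphism $A\times B\to A\otimes B$.

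Concretely, to make the argument self-contained I would spell out the initiality check directly: given any bimorphism $h\colon A\times B\to C$, Lemma~\ref{lemma:tensors:colimits} produces the cocone $(v^h_{g_A,g_B})$ under $D_{A,B}$ with apex $C$; by the universal property of $T=\varinjlim D_{A,B}$ there is a unique morphism of effect algebras $u\colon T\to C$ with $u\circ\lambda_{g_A,g_B}=v^h_{g_A,g_B}$ for all $(g_A,g_B)$; applying the inverse half of the isomorphism of Lemma~\ref{lemma:tensors:colimits} (which sends a morphism of cocones to the corresponding $\EA$-morphism acting on bimorphisms by post-composition) shows $u$ is the unique $\EA$-morphism with $u\circ h_T=h$. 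Uniqueness on the bimorphism side follows from uniqueness on the cocone side, again because the correspondence is a bijection on hom-sets. Hence $h_T$ is initial in $\beta_{A,B}$, as required.

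I do not expect any real obstacle here; the statement is a routine corollary. The only point requiring a modicum of care is making sure one invokes the \emph{isomorphism} of categories from Lemma~\ref{lemma:tensors:colimits} — not merely an object-level bijection — so that the transfer of the universal (initiality) property, including the morphism part, is automatic; and noting explicitly that $\varinjlim D_{A,B}$ exists because $\EA$ is cocomplete. Everything else is unwinding definitions.
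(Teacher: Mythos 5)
Your argument is correct and is exactly the route the paper intends: the corollary follows from Lemma~\ref{lemma:tensors:colimits} by transporting the initial object (the universal cocone, which exists since $\EA$ is cocomplete) across the isomorphism $\beta_{A,B}\simeq$ cocones under $D_{A,B}$, so that the colimit corresponds to the initial bimorphism, i.e.\ to $A\otimes B$. The paper leaves this transfer implicit; your spelled-out initiality check is just the explicit version of the same argument.
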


\begin{theorem}
\label{thm:tensoriskan}
The tensor product of effect algebras is a functor $\EA\times\EA\to\EA$ that arises
as a left Kan extension of the functor $E\circ*\colon \FinBool\times\FinBool\to\EA$ along the
inclusion $E\times E\colon \FinBool\times\FinBool\to\EA\times\EA$.
\begin{equation}
\label{eq:kan}
\xymatrix@C4em@R4em{
\EA\times \EA
	\ar[rrd]^{\otimes}
&
~
\\
\FinBool\times\FinBool
	\ar[u]^{E\times E}
	\ar[r]_-{*}
&
\FinBool
	\ar[r]_{E}
	\ultwocell<\omit>{<-2>\eta}
&
\EA
}
\end{equation}
\end{theorem}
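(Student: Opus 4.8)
The plan is to verify the universal property of the left Kan extension directly, using the colimit formula for $\mathrm{Lan}$ together with the identifications already established. Recall that for a functor $F\colon \FinBool\times\FinBool\to\EA$ and the fully faithful embedding $E\times E$, the pointwise left Kan extension evaluated at a pair $(A,B)$ is the colimit of $F$ over the comma category $(E\times E)/(A,B)$. Here $F = E\circ *$, so I must compare $\varinjlim$ over $(E\times E)/(A,B)$ of the composite $(2^{[n]},2^{[m]})\mapsto 2^{[n]}*2^{[m]}$ with $A\otimes B$, which by Corollary~\ref{coro:tensoriscolimit} is $\varinjlim D_{A,B}$, the colimit over $\elems{A}\times\elems{B}$.

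The key step is therefore to identify the comma category $(E\times E)/(A,B)$ with $\elems{A}\times\elems{B}$ in a way compatible with the two diagrams. An object of $(E\times E)/(A,B)$ is a pair of objects $(2^{[n]},2^{[m]})$ of $\FinBool\times\FinBool$ together with a morphism $(2^{[n]},2^{[m]})\to(A,B)$ in $\EA\times\EA$, i.e.\ a pair of effect algebra morphisms $g_A\colon E(2^{[n]})\to A$, $g_B\colon E(2^{[m]})\to B$ — that is, exactly a pair of finite observables, exactly an object of $\elems{A}\times\elems{B}$. A morphism in the comma category is a pair $(f_A,f_B)$ of $\FinBool$-morphisms making the evident triangles commute, which is precisely a morphism of $\elems{A}\times\elems{B}$. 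Under this isomorphism of categories, the canonical projection $(E\times E)/(A,B)\to\FinBool\times\FinBool$ followed by $E\circ*$ sends $(g_A,g_B)$ to $\Dom(g_A)*\Dom(g_B) = D_{A,B}(g_A,g_B)$, so the two diagrams literally coincide. Hence $\mathrm{Lan}_{E\times E}(E\circ*)(A,B) = \varinjlim D_{A,B} = A\otimes B$, and functoriality of the Kan extension matches the functoriality of $\otimes$ established via Lemma~\ref{lemma:tensors:colimits} and Corollary~\ref{coro:tensoriscolimit}.

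To be careful I would also record that the pointwise left Kan extension exists: this needs $\EA$ to have the relevant colimits, which holds because $\EA$ is cocomplete (cited from \cite{jacobs2012coreflections} in the preliminaries) and each comma category $(E\times E)/(A,B) \cong \elems{A}\times\elems{B}$ is small (the latter because $\elems{A}$ and $\elems{B}$ are small, as noted after the definition of $\elems{A}$). It then remains to exhibit the natural transformation $\eta\colon E\circ* \Rightarrow \otimes\circ(E\times E)$ of diagram~(\ref{eq:kan}) and check it is the universal one. The component of $\eta$ at $(2^{[n]},2^{[m]})$ is the colimit coprojection of $2^{[n]}*2^{[m]} = D_{A,B}(\obs{\ldots})$ at the object $(\id_{2^{[n]}},\id_{2^{[m]}})$ of the comma category, i.e.\ the canonical map $2^{[n]}*2^{[m]}\to E(2^{[n]})\otimes E(2^{[m]})$; naturality and the universal property are then exactly the general statement of the pointwise Kan extension formula (e.g.\ \cite[Ch.~6]{riehl2016category} or \cite{mac1998categories}).

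The main obstacle is bookkeeping rather than conceptual: one must check that the isomorphism of categories $(E\times E)/(A,B)\cong\elems{A}\times\elems{B}$ is natural in $(A,B)$, so that the pointwise Kan extension assembles into the \emph{functor} $\otimes$ and the triangle~(\ref{eq:kan}) commutes up to the claimed $\eta$ rather than merely pointwise. This naturality is routine — it amounts to observing that precomposition with a pair of $\EA$-morphisms $(A,B)\to(A',B')$ acts on observables by postcomposition on both sides, matching the functoriality of $D_{-,-}$ — but it is the one place where a little diagram chasing is genuinely required. Everything else is a direct appeal to Corollary~\ref{coro:tensoriscolimit} and the definition of the pointwise left Kan extension.
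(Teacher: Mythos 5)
Your proposal is correct and follows essentially the same route as the paper: both compute the pointwise left Kan extension via the colimit over the comma category $(E\times E)\downarrow(A,B)$, identify that comma category with $\elems{A}\times\elems{B}$ so that the diagram becomes $D_{A,B}$, and conclude by Corollary~\ref{coro:tensoriscolimit}. Your additional remarks on existence (cocompleteness of $\EA$, smallness of the comma category), the unit $\eta$, and naturality in $(A,B)$ only make explicit details the paper leaves implicit.
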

\begin{proof}
By (a dual version of) \cite[Theorem X.3.1]{mac1998categories}, we can express
the value of this Kan extension at $(A,B)$
as a colimit of a functor
$$
(\FinBool\times\FinBool\downarrow(A,B))\xrightarrow{Q}(\FinBool\times\FinBool)\xrightarrow{*}\EA\,,
$$
where $Q$ is the projection functor.
The comma category $\FinBool\times\FinBool\downarrow(A,B)$ is isomorphic to
$\elems{A}\times\elems{B}$ and the functor $*\circ Q$ is just the $D_{A,B}$ functor.
The rest follows by Corollary \ref{coro:tensoriscolimit}.
\end{proof}
\begin{corollary}
For any two finite Boolean algebras $A,B$, 
$$A*B\simeq A\otimes B.$$
\end{corollary}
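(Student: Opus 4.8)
The plan is to combine Corollary~\ref{coro:tensoriscolimit}, which identifies $A\otimes B$ with $\varinjlim D_{A,B}$, with the observation that when $A$ and $B$ are \emph{finite} Boolean algebras the indexing category $\elems{A}\times\elems{B}$ has a terminal object.

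First I would verify that for a finite Boolean algebra $2^{[n]}$ the object $(2^{[n]},\id_{2^{[n]}})$ is terminal in $\elems{2^{[n]}}$. Indeed, for any finite observable $g\colon 2^{[k]}\to 2^{[n]}$, an arrow $g\to\id_{2^{[n]}}$ in $\elems{2^{[n]}}$ is a morphism of Boolean algebras $f\colon 2^{[k]}\to 2^{[n]}$ with $\id_{2^{[n]}}\circ f=g$, that is, $f=g$; hence there is exactly one such arrow. Consequently, whenever $A,B\in\FinBool$, the category $\elems{A}\times\elems{B}$ has the terminal object $(\id_A,\id_B)$.

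Next I would invoke the general fact that the colimit of a functor whose domain has a terminal object $t$ is the value of the functor at $t$, with the cocone legs induced by the unique arrows into $t$. Applied to $D_{A,B}$, this gives $\varinjlim D_{A,B}\simeq D_{A,B}(\id_A,\id_B)=\Dom(\id_A)*\Dom(\id_B)=A*B$, and together with Corollary~\ref{coro:tensoriscolimit} we conclude $A\otimes B\simeq A*B$.

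I do not expect a real obstacle here; the only points requiring a little care are the direction of the ``terminal object $\Rightarrow$ colimit is the value there'' principle (as opposed to ``initial object $\Rightarrow$ limit'') and the harmless identification of $\Dom(\id_{2^{[n]}})$ with $2^{[n]}$. Alternatively, one could argue purely formally: since $E$ is fully faithful, so is $E\times E$, and a left Kan extension along a fully faithful functor restricts back to the original functor on the subcategory; applying this to the Kan extension of Theorem~\ref{thm:tensoriskan} yields $\otimes\circ(E\times E)\simeq E\circ{*}$, which is exactly the claim.
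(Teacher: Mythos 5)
Your main argument is correct, and it takes a more concrete route than the paper. The paper's proof is a one-liner: it cites the general fact (Riehl, Remark 6.1.2) that the unit of a (pointwise) left Kan extension along a fully faithful functor is an isomorphism, applied to the Kan extension of Theorem~\ref{thm:tensoriskan} with $E\times E$ fully faithful --- exactly the alternative you sketch in your last sentence. Your primary route instead unwinds the colimit description: you correctly observe that $(2^{[n]},\id_{2^{[n]}})$ is terminal in $\elems{2^{[n]}}$ (an arrow $g\to\id$ is an $f$ with $\id\circ f=g$, so $f=g$ is forced; note this uses that $\Bool$ is full in $\EA$, so every $\EA$-morphism between finite Boolean algebras lives in $\FinBool$), hence $(\id_A,\id_B)$ is terminal in $\elems{A}\times\elems{B}$, and a colimit over a category with a terminal object is the value there, giving $\varinjlim D_{A,B}\simeq A*B$; combined with Corollary~\ref{coro:tensoriscolimit} this is the claim. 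What your approach buys is self-containedness and an explicit identification of the colimiting cocone (the legs are the maps $\Dom(g_A)*\Dom(g_B)\to A*B$ induced by $g_A,g_B$ themselves); what the paper's citation buys is brevity, and it is really the same mechanism in disguise, since the comma category having a terminal object is exactly why the unit of a pointwise Kan extension along a fully faithful functor is invertible. The only care needed in the formal version is that the Kan extension be pointwise, which it is here because Theorem~\ref{thm:tensoriskan} computes it objectwise as a colimit.
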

\begin{proof}
By \cite[Remark 6.1.2]{riehl2016category}, the unit $\eta$ of the Kan extension (\ref{eq:kan}) is an isomorphism, because the functor $E\times E$ is full and faithful.
\end{proof}

It was proved by Day in \cite{day1970onclosed} that for every 
monoidal category $(\C,\square,I)$, the monoidal
structure can be extended to the category 
$[\C^{op},\Set]$ of presheaves on $\C$ by the rule
$$
X\Day Y=\int^{(c_1,c_2)}\C^{op}(c_1\square c_2,c)\times X(c_1)\times Y(c_2).
$$
\begin{theorem}
For every pair $A,B$ of effect algebras,
$$
A\otimes B\simeq L(R(A)\Day R(B))
$$
\end{theorem}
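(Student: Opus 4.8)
The plan is to combine the left Kan extension description of the tensor product (Theorem~\ref{thm:tensoriskan}) with the standard fact that the Day convolution is itself a left Kan extension, and then to use the reflection $L\isleftadjoint R$ of Theorem~\ref{thm:density} to transport the computation from presheaves to effect algebras. Concretely, I would first recall that for a small monoidal category $(\C,\square,I)$ the Day convolution $X\Day Y$ is the left Kan extension of $\C(-,-)\circ(\square)\colon\C^{op}\times\C^{op}\to\Set$ composed appropriately, or equivalently that the Yoneda embedding $y\colon\C\to[\C^{op},\Set]$ is strong monoidal, so that $y(c_1)\Day y(c_2)\simeq y(c_1\square c_2)$, and that $\Day$ preserves colimits in each variable (being a left adjoint in each variable). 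Here $\C=\FinBool$ with $\square=*$, so $y(2^{[n]})\Day y(2^{[m]})\simeq y(2^{[n]}*2^{[m]})$.

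Next I would observe that $R\colon\EA\to[\FinBool^{op},\Set]$ agrees with the Yoneda embedding on $\FinBool$: for a finite Boolean algebra $2^{[n]}$ we have $R(E(2^{[n]}))=\EA(E(-),E(2^{[n]}))\simeq\FinBool(-,2^{[n]})=y(2^{[n]})$, using that $E$ is full and faithful. I would then write, for arbitrary effect algebras $A$ and $B$, the presheaf $R(A)$ as a colimit of representables indexed by its category of elements, $R(A)\simeq\colim{\elems{A}}\,y\circ\pi_{R(A)}$, and similarly for $R(B)$; this is the standard density/co-Yoneda presentation of a presheaf. Applying $\Day$, which is cocontinuous in each argument, gives
\begin{equation*}
R(A)\Day R(B)\simeq\colim{\elems{A}\times\elems{B}}\bigl(y(\Dom(g_A))\Day y(\Dom(g_B))\bigr)\simeq\colim{\elems{A}\times\elems{B}}\,y\bigl(\Dom(g_A)*\Dom(g_B)\bigr).
\end{equation*}

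Then I would apply $L$. Since $L$ is a left adjoint it preserves colimits, and on representables $L(y(2^{[n]}))\simeq E(2^{[n]})$ — this follows because $L\isleftadjoint R$ and $R$ restricted to $\FinBool$ is Yoneda, or directly from the colimit formula defining $L$ together with the fact that $\int y(2^{[n]})$ has a terminal object $(2^{[n]},\id)$. Hence
\begin{equation*}
L(R(A)\Day R(B))\simeq\colim{\elems{A}\times\elems{B}}L\bigl(y(\Dom(g_A)*\Dom(g_B))\bigr)\simeq\colim{\elems{A}\times\elems{B}}E\bigl(\Dom(g_A)*\Dom(g_B)\bigr)=\varinjlim D_{A,B},
\end{equation*}
which equals $A\otimes B$ by Corollary~\ref{coro:tensoriscolimit}. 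An alternative, perhaps cleaner route is purely formal: $R(A)\Day R(B)$ is the left Kan extension along $y\times y$ of $y\circ{*}$, so $L(R(A)\Day R(B))$ is computed by the same colimit as the left Kan extension of $(L\circ y)\circ{*}=E\circ{*}$ along $y\times y$; pre-composing with the equivalence $\elems{A}\times\elems{B}\simeq(\FinBool\times\FinBool\downarrow(A,B))$ and invoking Theorem~\ref{thm:tensoriskan} identifies this with $A\otimes B$.

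The main obstacle will be making the interchange ``$L$ of a Day convolution is the Kan extension of $E\circ*$'' precise: one must be careful that the two colimits are genuinely indexed by (equivalent) categories and that $\Day$ is being used as a genuine pointwise left Kan extension so that cocontinuity in each variable applies. The cleanest way around this is to avoid writing $\Day$ as an iterated coend explicitly and instead use the universal property: natural transformations $R(A)\Day R(B)\to R(C)$ correspond to natural families, which under the adjunction and the identifications above correspond exactly to bimorphisms $A\times B\to C$, i.e. to morphisms $A\otimes B\to C$; then conclude by Yoneda. I expect the bookkeeping of these natural bijections — rather than any deep new idea — to be the only real work.
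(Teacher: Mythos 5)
Your argument is correct, but it is organized differently from the proof in the paper. The paper works directly with the coend defining $R(A)\Day R(B)$: it pushes $L$ inside the coend (as a left adjoint $L$ preserves coends), computes $L$ of each integrand $\FinBool(\_,c_1*c_2)\times\EA(c_1,A)\times\EA(c_2,B)$ by decomposing its category of elements into copies of $\elems{c_1*c_2}$ and invoking Theorem~\ref{thm:density} to get the copower $(\EA(c_1,A)\times\EA(c_2,B))\cdot(c_1*c_2)$, and then recognizes the resulting coend as the pointwise formula for the left Kan extension of $E\circ *$ along $E\times E$, so that Theorem~\ref{thm:tensoriskan} finishes the proof. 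You instead expand $R(A)$ and $R(B)$ by the co-Yoneda/density presentation over $\elems{A}$ and $\elems{B}$, use that $\Day$ is cocontinuous in each variable and that the Yoneda embedding is strong monoidal for Day convolution ($y(c_1)\Day y(c_2)\simeq y(c_1*c_2)$), and that $L\circ y\simeq E$, reducing everything to $\varinjlim D_{A,B}$ and Corollary~\ref{coro:tensoriscolimit}; the Kan extension theorem~\ref{thm:tensoriskan} is not actually needed on your main route. What your route buys is brevity and transparency, at the cost of importing standard but unproved-in-the-paper facts about Day convolution (biclosedness, hence cocontinuity in each variable, and strong monoidality of $y$), plus a colimit Fubini/interchange step to pass from the iterated colimit to the colimit over $\elems{A}\times\elems{B}$, which you should state explicitly; the paper's route is more self-contained, using only the coend formula for $\Day$, preservation of coends by $L$, and its own Theorem~\ref{thm:density}, and it additionally exhibits $(A,B)\mapsto L(R(A)\Day R(B))$ itself as a left Kan extension. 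Your closing suggestion to reprove the statement via the universal property and bimorphisms would essentially repeat Lemma~\ref{lemma:tensors:colimits} and is unnecessary given the colimit description.
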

\begin{proof}
In our case,
$$
R(A)\Day R(B)=\int^{(c_1,c_2)}
	\FinBool(\_,c_1*c_2)\times\EA(c_1,A)\times\EA(c_2,B).
$$
Since $L$ is a left adjoint, $L$ preserves coends, so we may write
$$
L(R(A)\Day R(B))\simeq\int^{(c_1,c_2)}
	L(\FinBool(\_,c_1*c_2)\times\EA(c_1,A)\times\EA(c_2,B)),
$$
In detail, the category of elements
\begin{equation}
el(c_1,c_2)=\int \FinBool(\_,c_1*c_2)\times\EA(c_1,A)\times\EA(c_2,B)
\end{equation}
is a category with objects $(t,g_A,g_B)$, where $t\colon c\to c_1*c_2$, $g_A\colon c_1\to A$
and $g_B\colon c_2\to B$ and morphism being given by precomposition in the first variable:
$h\colon (t,g_A,g_B)\to (s,g_A,g_B)$ is simply the fact that $s\circ h=t$.
It is obvious that the morphisms in $el(c_1,c_2)$ preserve the pair $(g_A,g_B)$, so
$el(c_1,c_2)$ consists of pairwise isomorphic disjoint parts indexed by the elements of
the set $\EA(c_1,A)\times\EA(c_2,B)$. Note that each of the disjoint parts is isomorphic
to $\elems{c_1*c_2}$. In other words, 
$$
el(c_1,c_2)\simeq (\EA(c_1,A)\times\EA(c_2,A))\cdot \int R(c_1*c_2),
$$

where the dot denotes the copower.
To compute the value of $L$ means to take a colimit of a functor 
$D\colon el(c_1,c_2)\to\EA$ that maps every triple $(t,g_A,g_B)$ to the domain of
$t$.
Since $D$ behaves exactly like $D_{c_1*c_2}$ on every copy of $\int R(c_1*c_2)$
and since (by Theorem~\ref{thm:density}) $\varinjlim D_{c_1*c_2}\simeq c_1*c_2$,
we see that
$$
\varinjlim D\simeq (\EA(c_1,A)\times\EA(c_2,B))\cdot(c_1*c_2)
$$
and thus
\begin{align*}
L(R(A)\Day R(B))\simeq \int^{(c_1,c_2)}
	(\EA(c_1,A)\times\EA(c_2,B))\cdot(c_1*c_2)\\
	\simeq \int^{(c_1,c_2)}(\EA\times\EA)((c_1,c_2),(A,B))\cdot(c_1*c_2).
\end{align*}
By \cite[Theorem X.4.1]{mac1998categories}, this means that 
the functor $(A,B)\mapsto L(R(A)\otimes R(B))$ is
a left Kan extension of the functor $E\circ *\colon \FinBool\times\FinBool\to\EA$ 
along the inclusion $E\times E\colon \FinBool\times\FinBool\to\EA\times\EA$.
The rest follows by Theorem \ref{thm:tensoriskan}.
\end{proof}


\begin{thebibliography}{10}
\providecommand{\url}[1]{{#1}}
\providecommand{\urlprefix}{URL }
\expandafter\ifx\csname urlstyle\endcsname\relax
  \providecommand{\doi}[1]{DOI~\discretionary{}{}{}#1}\else
  \providecommand{\doi}{DOI~\discretionary{}{}{}\begingroup
  \urlstyle{rm}\Url}\fi

\bibitem{BenFou:IaSEA}
Bennett, M., Foulis, D.: Interval and scale effect algebras.
\newblock Advances in Applied Mathematics \textbf{19}, 200--215 (1997)

\bibitem{Ber:OLaAA}
Beran, L.: Orthomodular Lattices, Algebraic Approach.
\newblock Kluwer, Dordrecht (1985)

\bibitem{borger2004tensor}
B{\"{o}}rger, R.: The tensor product of orthomodular posets.
\newblock In: Categorical Structures and Their Applications: Proceedings of the
  North-West European Category Seminar, Berlin, Germany, 28-29 March 2003,
  p.~29. World Scientific (2004)

\bibitem{BusLahMit:TQToM}
Busch, P., Lahti, P., Mittelstaedt, P.: The Quantum Theory of Measurement, 2nd
  edn.
\newblock Springer Verlag (1996)

\bibitem{Cha:AAoMVL}
Chang, C.: Algebraic analysis of many-valued logics.
\newblock Trans. Amer. Math. Soc. \textbf{88}, 467--490 (1959)

\bibitem{ChoKop:BDP}
Chovanec, F., K{\^o}pka, F.: {B}oolean {D}-posets.
\newblock Tatra Mt. Math. Publ \textbf{10}, 183--197 (1997)

\bibitem{day1970onclosed}
Day, B.: On closed categories of functors, pp. 1--38.
\newblock Springer Berlin Heidelberg, Berlin, Heidelberg (1970).
\newblock \doi{10.1007/BFb0060438}.
\newblock \urlprefix\url{http://dx.doi.org/10.1007/BFb0060438}

\bibitem{dvurevcenskij1995tensor}
Dvure{\v{c}}enskij, A.: Tensor product of difference posets.
\newblock Transactions of the American Mathematical Society \textbf{347}(3),
  1043--1057 (1995)

\bibitem{DvuPul:NTiQS}
Dvure{\v c}enskij, A., Pulmannov{\'a}, S.: New Trends in Quantum Structures.
\newblock Kluwer, Dordrecht and Ister Science, Bratislava (2000)

\bibitem{foulis1993tensor}
Foulis, D., Bennett, M.: Tensor products of orthoalgebras.
\newblock Order \textbf{10}(3), 271--282 (1993)

\bibitem{FouBen:EAaUQL}
Foulis, D., Bennett, M.: Effect algebras and unsharp quantum logics.
\newblock Found. Phys. \textbf{24}, 1325--1346 (1994)

\bibitem{FouGreRut:FaSiO}
Foulis, D., Greechie, R., R{\"u}timann, G.: Filters and supports in
  orthoalgebras.
\newblock Int. J. Theor. Phys. \textbf{35}, 789--802 (1995)

\bibitem{GiuGre:TaFLfUP}
Giuntini, R., Greuling, H.: Toward a formal language for unsharp properties.
\newblock Found. Phys. \textbf{19}, 931--945 (1989)

\bibitem{Goo:POAGwI}
Goodearl, K.: Partially ordered abelian groups with interpolation.
\newblock Amer. Math. Soc, Providence (1986)

\bibitem{GudPul:QoPAM}
Gudder, S., Pulmannov{\'a}, S.: Quotients of partial abelian monoids.
\newblock Algebra univers. \textbf{38}, 395--421 (1998)

\bibitem{jacobs2012coreflections}
Jacobs, B., Mandemaker, J.: Coreflections in algebraic quantum logic.
\newblock Foundations of physics \textbf{42}(7), 932--958 (2012)

\bibitem{jenca2015effect}
Jenča, G.: Effect algebras are the {E}ilenberg-{M}oore category for the
  {K}almbach monad.
\newblock Order \textbf{32}(3), 439--448 (2015).
\newblock \doi{10.1007/s11083-014-9344-6}.
\newblock \urlprefix\url{http://dx.doi.org/10.1007/s11083-014-9344-6}

\bibitem{Kal:OL}
Kalmbach, G.: Orthomodular Lattices.
\newblock Academic Press, New York (1983)

\bibitem{KopCho:DP}
K{\^o}pka, F., Chovanec, F.: D-posets.
\newblock Math. Slovaca \textbf{44}, 21--34 (1994)

\bibitem{mac1998categories}
Lane, S.M.: Categories for the Working Mathematician.
\newblock No.~5 in Graduate Texts in Mathematics. Springer-Verlag (1971)

\bibitem{Lud:FoQM}
Ludwig, G.: Foundations of Quantum Mechanics.
\newblock Springer-Verlag, Berlin (1983)

\bibitem{maclane2012sheaves}
Mac~Lane, S., Moerdijk, I.: Sheaves in geometry and logic: A first introduction
  to topos theory.
\newblock Springer Science \& Business Media (2012)

\bibitem{Mun:IoAFCSAiLSC}
Mundici, D.: Interpretation of {A}{F} ${C}^*$-algebras in {L}ukasziewicz
  sentential calculus.
\newblock J. Functional Analysis \textbf{65}, 15--53 (1986)

\bibitem{PtaPul:OSaQL}
Pt{\'a}k, P., Pulmannov{\'a}, S.: Orthomodular Structures as Quantum Logics.
\newblock Kluwer, Dordrecht (1991)

\bibitem{Rav:OaSToEA}
Ravindran, K.: On a structure theory of effect algebras.
\newblock Ph.D. thesis, Kansas State Univ., Manhattan, Kansas (1996)

\bibitem{riehl2016category}
Riehl, E.: Category theory in context.
\newblock Courier Dover Publications (2016)

\bibitem{staton2015effect}
Staton, S., Uijlen, S.: Effect algebras, presheaves, non-locality and
  contextuality.
\newblock In: International Colloquium on Automata, Languages, and Programming,
  pp. 401--413. Springer (2015)

\end{thebibliography}

\end{document}